\newtheorem{theorem}{Theorem}[section]
\newtheorem{question}[theorem]{Question}
\newtheorem{corollary}[theorem]{Corollary}
\newtheorem{definition}[theorem]{Definition}
\newtheorem{example}[theorem]{Example}
\newtheorem{lemma}[theorem]{Lemma}
\newenvironment{proof}[1][Proof]{\textbf{#1 }}
\begin{document}

 \baselineskip 15pt

\title{Groups in which some primary subgroups are weakly s-supplemented\thanks{The
 author was supported by  NNSF of P. R. China (Grant 11071229),
    Scientific Research Foundation of CUIT(Grant J201114) and
  a China Postdoctoral Science Foundation funded project  (Grant  20110491726)}}
\author{Baojun Li\\
{\small College of Applied Mathematics,
Chengdu University of Information Technology }\\
{\small Chengdu   610225, P. R. China}\\
{\small\  E-mail: baojunli@cuit.edu.cn}\\
}

\date{}
\maketitle

\abstract
A subgroup $H$ of a group $G$ is called
weakly s-supplemented in $G$ if there is a subgroup $T$ such that $G=HT$ and $H\cap T\le H_{sG}$,
where $H_{sG}$ is  the subgroup of $H$
generated by all those subgroups of $H$ which are s-permutable in $G$. The influence of primary
 weakly s-supplemented subgroups on the
structure of finite groups is investigated.  An open question promoted by Skiba
is studied and some known results are generalized.

\section{Introduction}
All groups  considered in this paper are finite. The notions and notations not introduced are standard and
the reader is referred to \cite{D-H-BOOK,Huppert-BOOK-I,GUO-BOOK}
if necessary.

A subgroup $H$ of a group $G$ is said to be a permutable
subgroup (cf. \cite{D-H-BOOK}) of $G$ or a quasinormal subgroup of $G$ (cf. \cite{Ore-qusinormal}) if
$H$ is permutable with all subgroups of $G$.  The permutability of subgroups plays an
  important role in the study of the structure of finite groups and was
 generalized extensively. Recall that
 a subgroup $H$  of a group $G$ is called s-permutable (or s-quasinormal) in $G$
if $H$ permutes with every Sylow subgroup of $G$(cf. \cite{Kegel}). Let $H$ be a subgroup of $G$. $H_{sG}$ denotes  the subgroup of $H$ generated by all
those subgroups of $H$ which are s-permutable in $G$. In \cite{On-weakly-s-permutable-subgroups}, the following definitions are introduced.

 \begin{definition} {\rm\cite{On-weakly-s-permutable-subgroups}}
 Let $H$ be a subgroup  $G$. $H$ is called
weakly s-supplemented in $G$ if there is a subgroup $T$ such that $G=HT$ and $H\cap T\le H_{sG}$, and if $T$ is subnormal in $G$ then $H$ is called
weakly s-permutable in $G$
\end{definition}

By using this idea, Skiba
 \cite{On-weakly-s-permutable-subgroups} proved the following nice result.
  \begin{theorem}
    Let
$\frak{F}$ be a saturated formation containing all supersoluble
groups and $G$ a group with a normal subgroup $E$ such that
 $G/E \in\frak{F}$. Suppose that every
non-cyclic Sylow subgroup $P$ of $E$ has a subgroup $D$ such that $1
< |D| < |P|$ and all subgroups $H$ of $P$ with order $|H| = |D|$ and
with order $2|D|$ (if $P$ is a non-abelian 2-group and $|P : D|> 2$)
not having a supersoluble supplement in $G$ are weakly s-permutable
in $G$. Then  $G \in \frak{F}$.
  \end{theorem}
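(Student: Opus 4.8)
The plan is to argue by contradiction, choosing a counterexample $(G,E)$ with $|G|+|E|$ minimal. The first task is to record the inheritance of the hypothesis. Using the standard facts that weak s-permutability in $G$ passes to every subgroup containing the given subgroup, that --- after factoring out a normal subgroup $N$ contained in $H$ --- it passes to the quotient, and that the existence of a supersoluble supplement in $G$ is preserved under both operations, one checks that for every $K$ with $E\le K\le G$ the pair $(K,E)$ again satisfies the hypothesis, and that for every $N\trianglelefteq G$ so does $(G/N,EN/N)$; the only non-formal point is the production of a valid subgroup $D$ for the Sylow $q$-subgroup of $EN/N$ when $q\mid|N|$, and this is exactly where the size of $D$ relative to $|N|$ and the non-abelian $2$-group exception must be tracked. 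Minimality then forces the usual reductions: $E$ has no proper subgroup $K$ with $G=EK$; no minimal normal subgroup $N$ of $G$ with $N\le E$ is contained in $\Phi(G)$ (otherwise $G/N\in\mathfrak F$ and saturatedness of $\mathfrak F$ give $G\in\mathfrak F$); and --- the genuine target --- no such $N$ lies in the $\mathfrak F$-hypercentre $Z_{\mathfrak F}(G)$ (otherwise $G/N\in\mathfrak F$ together with $N\le Z_{\mathfrak F}(G)$ give $G\in\mathfrak F$).

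Next I would pin down the structure of $E$. Applying the Sylow hypothesis one prime at a time --- cyclic Sylow subgroups being handled by Burnside's normal $p$-complement theorem and related criteria, non-cyclic ones by the known $p$-nilpotency and $p$-closure theorems for groups all of whose subgroups of a fixed order are weakly s-permutable, and the ``supersoluble supplement'' alternative absorbed by a separate argument exploiting such a supplement as a large supersoluble subgroup of $G$ --- I would show that $E$ is soluble, in fact with a Sylow tower of supersoluble type. In particular, for the largest prime $q\mid|E|$ the Sylow $q$-subgroup $Q$ of $E$ is characteristic in $E$ and hence normal in $G$, so $Q$ contains a minimal normal subgroup $N$ of $G$, which is an elementary abelian $q$-group. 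By the reductions above it suffices to reach the contradiction $N\le Z_{\mathfrak F}(G)$; since $\mathfrak F$ contains all supersoluble groups and $G/N\in\mathfrak F$, it is enough to prove that every $G$-chief factor inside $N$ has order $q$, i.e. that $N\le Z_{\mathfrak U}(G)$.

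This last step is the heart of the proof and it splits on the size of the relevant subgroup $D$ of $Q$. If $|D|\ge|N|$, then subgroups of $N$ of order $|D|$ are subgroups of $Q$ of that order, so each such $L$ either has a supersoluble supplement in $G$ or is weakly s-permutable in $G$; analysing $L\cap T\le L_{sG}$ together with the fact that an s-permutable $q$-subgroup is subnormal and normalised by $O^q(G)$, one forces $|N|=q$. If $|D|<|N|$, one instead fixes inside $N$ a subgroup $L$ of $Q$ of order $|D|$ (and, in the non-abelian $2$-group case, treats the subgroups of order $2|D|$ separately), shows that $L$ must actually be s-permutable --- the supersoluble-supplement alternative collapsing because $G=LS$ with $S$ supersoluble and $L\le N\trianglelefteq G$ already forces a supersoluble action on $N$ --- and then propagates this up a $G$-chief series through $N$ via the classical $q$-supersolubility criteria for normal $q$-subgroups whose subgroups of a fixed order are s-permutable. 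I expect the principal obstacle to be precisely this case analysis welded to the induction: in the case $|D|<|N|$ one must choose, for $G/N$, a subgroup $\bar D$ of $Q/N$ of the correct order so that $(G/N,EN/N)$ still satisfies the hypothesis, while simultaneously controlling the prime $2$ and the subgroups ``of order $2|D|$'', for which one needs the observation that a weakly s-permutable subgroup of order $4$ of a non-abelian Sylow $2$-subgroup is either s-permutable in $G$ or admits a supersoluble supplement in $G$.
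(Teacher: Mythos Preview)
This statement is not proved in the paper at all: it is quoted verbatim as Skiba's theorem from \cite{On-weakly-s-permutable-subgroups}, and the paper's own contribution is the distinct Theorem~\ref{B}, which treats the analogous question for \emph{weakly s-supplemented} (not weakly s-permutable) subgroups under extra arithmetic constraints on $|D|$. There is therefore no proof in this paper against which to compare your proposal; for Skiba's original argument you must consult \cite{On-weakly-s-permutable-subgroups}.

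For what it is worth, your overall architecture --- inheritance of the hypothesis to subgroups and quotients, reduction to a Sylow tower in $E$, and then forcing the top Sylow $q$-subgroup $Q$ into $Z_\infty^{\mathfrak U}(G)$ --- matches the scheme the paper uses for its Theorem~\ref{B}. But the step you call ``the heart of the proof'' is precisely where the paper spends most of its effort (Lemmas~\ref{31} and~\ref{32}), and even there only under the additional hypotheses (i)--(iii). In particular, your case $|D|<|N|$ is handled in the paper not by your proposed direct argument inside a single minimal normal $N$, but by an elaborate induction on $|P|$ that tracks $P\cap\Phi(G)$, $P'$, $\Phi(P)$, $\exp(P)$, and the uniqueness of $N$ simultaneously, and which in the end appeals to a result from \cite{On-Pi-property}. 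Your sketch that ``the supersoluble-supplement alternative collapses'' and that s-permutability of $L$ then ``propagates up a $G$-chief series'' is far too coarse to stand in for this; the paper's own version requires the extra conditions of Theorem~\ref{B} precisely because the unrestricted statement (Question~1.3) is false for weakly s-supplemented subgroups, as the authors note. Whether Skiba's stronger hypothesis of weak s-\emph{permutability} makes your shortcut go through is something this paper does not address.
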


 The above theorem generalized many known results. in connection with this, the following question was proposed by A. Skiba.

\begin{question}
 {\rm\cite[Question 6.4]{On-weakly-s-permutable-subgroups}}
 Let $\frak{F}$ be a saturated formation containing all supersolvable groups and $G$ a group
with a normal subgroup $E$ such that $G/E \in \frak{F}$. Suppose that every non-cyclic Sylow subgroup $P$
of $E$ has a subgroup $D$ such that $1 < |D| < |P|$ and all subgroups $H$ of $P$ with order $|H| = |D|$
and with order $2|D|$ (if $P$ is a non-abelian 2-group and $|P : D| > 2$) are weakly s-supplemented
in G. Is then $G \in \frak{F}$?
\end{question}

We have given an example in \cite{Guo-Xie-Li} to show that the answer of this  question is negative in general.
But, in the following theorem, we will prove  in many case the question has positive answer.

For convenience, if $m=p^\alpha$ is a $p$-number,  let  $\iota(m)$ denote log$_pm=\alpha$ and if $P$ is a $p$ group we use $\iota(P)$
instead  of $\iota(|P|)$.

\begin{theorem}\label{B}

Let $\frak{F}$ be a saturated formation containing all supersolvable groups and $G$ a group
with a normal subgroup $E$ such that $G/E \in \frak{F}$. Suppose that every non-cyclic Sylow subgroup
$P$ of $E$ has a subgroup $D$ such that $1 < |D| < |P|$ and all subgroups $H$ of $P$ with order
$|H| = |D|$ and with order $2|D|$ (if $P$ is a non-abelian 2-group and $|P : D|> 2$ ) having no supersolvable supplement in $G$ are weakly
s-supplemented in $G$. If one of the following holds:\\
$(i)$ $\Phi(P)\ne P'$;\\
$(ii)$ $|D|\le |P'|$;\\
$(iii)$ $|P'|<|D|$ and  $(\iota(P/P'),\iota(|D|/|P'|))=1$ or $(\iota(P),\iota(|P:D|))=1$;\\
then $G \in \frak{F}$
\end{theorem}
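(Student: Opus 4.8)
The plan is to run a minimal-counterexample argument, exactly in the style of Skiba's original proof of Theorem 1.2, and then identify at which single point the weaker hypothesis (weakly s-supplemented, rather than weakly s-permutable) would break down, and show that each of the three conditions $(i)$, $(ii)$, $(iii)$ is precisely what is needed to repair that point. So suppose the theorem is false and let $(G,E)$ be a counterexample with $|G||E|$ minimal. The first block of work is the standard reduction machinery: one shows that $E$ is not supersolvable (otherwise $G\in\frak F$ by the basic properties of saturated formations containing all supersolvable groups), that for every prime $p$ and Sylow $p$-subgroup $P$ of $E$ the hypothesis is inherited by suitable subgroups and quotients, and — this is the crucial structural output — that $E$ has a normal Sylow $p$-subgroup $P$ for the largest prime $p$ dividing $|E|$, with $P$ non-cyclic, and one works toward showing $P\le Z_{\frak U}(G)$ (the $\frak U$-hypercenter), which combined with $G/P$-information forces $G\in\frak F$ and gives the contradiction.

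**Next I would** isolate the key local lemma that drives everything: under the hypothesis, a subgroup $H$ of $P$ of the critical order $|D|$ (or $2|D|$) that is weakly s-supplemented in $G$ but has no supersolvable supplement is, after a short argument using $H_{sG}$ and the fact that s-permutable subgroups lie in $O^p(G)$ and normalize Sylow subgroups, either contained in a suitable normal subgroup or itself "almost" s-permutable, so that one can bootstrap to conclude either $|P|\le |D|$-type control or that $P$ has a $G$-chief series with cyclic factors below level $|D|$. The point where weakly s-supplemented is genuinely weaker than weakly s-permuted is that a complement $T$ with $H\cap T\le H_{sG}$ need not be subnormal, so one cannot directly pass to $T$ and induct; the supersolvable-supplement escape clause in the hypothesis is what normally absorbs the bad case in Skiba's proof, but here it is not strong enough by itself — and **the hard part will be** exactly this: controlling the subgroups of order $|D|$ that are weakly s-supplemented via a non-subnormal $T$.

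**This is where conditions $(i)$–$(iii)$ enter.** The idea is a counting/index argument on the exponents $\iota(P)$, $\iota(P')$, $\iota(\Phi(P))$, $\iota(|D|)$. When $\Phi(P)\ne P'$ (case $(i)$), $P'$ is properly contained in a maximal-under-$\Phi(P)$ situation that lets one choose the "covering" subgroups of order $|D|$ to interact well with the derived subgroup and push the hypercenter argument through. When $|D|\le|P'|$ (case $(ii)$), all critical subgroups sit inside or are comparable to $P'$, and one exploits that $P/P'$ is then already "cyclic enough" relative to the threshold. When $|P'|<|D|$ (case $(iii)$), the coprimality conditions $(\iota(P/P'),\iota(|D|/|P'|))=1$ or $(\iota(P),\iota(|P:D|))=1$ are number-theoretic gcd conditions ensuring that one can find, inside $P$, a subgroup of the critical order that is normal (or subnormal) in a Sylow subgroup of $G$ containing $P$ — a Hall/Sylow-type divisibility argument — thereby recovering the subnormality of the supplement $T$ that was lost, and reducing case $(iii)$ to the situation already handled by Skiba's Theorem~1.2. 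In each case the endgame is the same: conclude $P\le Z_{\frak U}(G)$, apply the standard fact that $\frak F\supseteq\frak U$ saturated plus $G/E\in\frak F$ and $E/P$ handled by induction yields $G\in\frak F$, contradicting the choice of $(G,E)$. I expect the bulk of the technical effort — and the genuinely new content beyond Skiba's paper — to be the gcd/exponent bookkeeping in case $(iii)$, together with verifying that the hypothesis is correctly inherited by $P\cap M$ and $PM/M$ for the relevant normal subgroups $M$ so that induction is actually available.
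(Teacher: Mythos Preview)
Your high-level architecture is right --- reduce to a normal Sylow $p$-subgroup $P$ of $E$, show $P\le Z_\infty^{\frak U}(G)$, and induct --- and this is indeed what the paper does. But your description of \emph{how} conditions $(i)$--$(iii)$ enter is off in a way that matters, and there is a substantial missing ingredient in the reduction step.

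First, the reduction. You assert that $E$ has a normal Sylow subgroup for the largest prime, but getting there requires showing $E$ satisfies the Sylow tower property, which in turn requires showing (for the \emph{smallest} prime $p$) that $G$ is $p$-solvable of $p$-length $1$. Under the weakly s-supplemented hypothesis this is nontrivial: the paper devotes an entire lemma (its Lemma~3.4) to it, and the final contradiction there uses Guralnick's classification of subgroups of prime-power index in simple groups to rule out a simple minimal counterexample. Your plan does not account for this.

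Second, and more concretely, your explanation of condition $(iii)$ is not the right mechanism. You write that the gcd conditions let one ``find a subgroup of critical order normal in a Sylow subgroup \dots\ thereby recovering the subnormality of the supplement $T$.'' That is not what happens. The actual engine is a structural lemma (Lemma~3.1 in the paper): if $P$ is normal in $G$ with $P\cap\Phi(G)=1$ and the hypothesis holds, then $P=P_1\times\cdots\times P_r$ with each $P_i$ minimal normal in $G$, \emph{all of the same order}, and $\iota(D)$ is an integer multiple of $\iota(P_1)$. Hence $\iota(P_1)$ is a common divisor of $\iota(P)$, $\iota(D)$, and $\iota(|P:D|)$. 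The gcd hypotheses in $(iii)$ then force $\iota(P_1)=1$, i.e.\ $|P_i|=p$, so $P\le Z_\infty^{\frak U}(G)$ immediately. There is no recovery of subnormality and no reduction to Skiba's theorem; the coprimality acts purely as an arithmetic constraint on the orders of the minimal normal pieces. Similarly, conditions $(i)$ and $(ii)$ feed into a separate (long) lemma handling the case $P'<P\cap\Phi(G)$ or $|D|\le|P'|$; your sketch for these cases is too vague to be judged either way, but the paper's argument is a multi-step minimal-counterexample analysis, not the one-line observations you suggest.
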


\section{Elementary Properties}

 \begin{lemma}{\rm(\cite[Lemma 2.10]{On-weakly-s-permutable-subgroups})}\label{sk1} Let $G$ be a group and $H \le K \le G$. Then

\noindent$(i)$  Suppose that $H$ is normal in $G$. Then $K/H$ is weakly s-supplemented in $G/H$ if and only if
$K$ is weakly  s-supplemented in $G$.\\
$(ii)$ If $H$ is weakly s--supplemented in $G$, then $H$ is weakly s-supplemented in $K$.\\
$(iii)$ Suppose that $H$ is normal in $G$. Then the  $HE/H$ is weakly
s-supplemented in $G/H$ for every weakly  weakly s-supplemented
in $G$ subgroup $E$ satisfying $(|H|, |E|) = 1$.
 \end{lemma}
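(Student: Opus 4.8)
The plan is to reduce all three parts to three standard facts about s-permutable subgroups and to the way they interact with the operator $H\mapsto H_{sG}$. First I would record: (a) if $A$ is s-permutable in $G$ and $A\le K\le G$, then $A$ is s-permutable in $K$; (b) if $A$ is s-permutable in $G$ and $N\trianglelefteq G$, then $AN/N$ is s-permutable in $G/N$; and (c) if $N\trianglelefteq G$, $N\le A\le G$ and $A/N$ is s-permutable in $G/N$, then $A$ is s-permutable in $G$. From (a) I get, for $H\le K\le G$, the monotonicity $H_{sG}\le H_{sK}$ (every generating s-permutable subgroup of $H$ in $G$ lies in $K$ and stays s-permutable in $K$). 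From (b) I get, for $H\trianglelefteq G$ and $H\le K$, that $K_{sG}H/H\le (K/H)_{s(G/H)}$ (the images of the generating s-permutable subgroups of $K$ are s-permutable in $G/H$ and lie in $K/H$). Conversely, by (c) every s-permutable subgroup of $K/H$ in $G/H$ has the form $L/H$ with $H\le L\le K$ and $L$ s-permutable in $G$, hence $L\le K_{sG}$; thus the full preimage of $(K/H)_{s(G/H)}$ lies in $K_{sG}$.

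For (i), in the ``only if'' direction I take a supplement $T/H$ of $K/H$ in $G/H$, so $KT=G$ and $(K\cap T)/H=(K/H)\cap(T/H)\le (K/H)_{s(G/H)}$; the preimage remark above gives $K\cap T\le K_{sG}$, so $T$ witnesses weak s-supplementation of $K$ in $G$. In the ``if'' direction I take a supplement $T$ of $K$ in $G$ and use $TH/H$: then $G/H=(K/H)(TH/H)$, and the modular law (using $H\le K$) gives $(K/H)\cap(TH/H)=(K\cap T)H/H\le K_{sG}H/H\le (K/H)_{s(G/H)}$. Part (ii) is the easiest: with $H\le K\le G$ and a supplement $T$ of $H$ in $G$, set $T_0=T\cap K$; the modular law gives $HT_0=HT\cap K=K$ and $H\cap T_0=H\cap T\le H_{sG}\le H_{sK}$, so $T_0$ witnesses weak s-supplementation of $H$ in $K$.

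The substantive case is (iii). Starting from a supplement $T$ of $E$ in $G$ (so $G=ET$ and $E\cap T\le E_{sG}$), the natural candidate supplement of $HE/H$ in $G/H$ is $HT/H$; then $G/H=(HE/H)(HT/H)$ is immediate, and the modular law gives $(HE/H)\cap(HT/H)=H(E\cap HT)/H$. The difficulty is that a priori $E\cap HT$ is strictly larger than $E\cap T$, so it cannot be bounded by $E_{sG}$ directly, and this is exactly where the hypothesis $(|H|,|E|)=1$ must be spent. I would resolve it by an order count. Put $\pi=\pi(H)$. The set product $E\cdot HT$ contains $ET=G$, hence equals $G$, so $|E\cap HT|=|E|\,|HT|/|G|=(|H|/|H\cap T|)\,|E\cap T|$. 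The left side divides $|E|$ and is therefore a $\pi'$-number, while $|H|/|H\cap T|$ is a $\pi$-number and $|E\cap T|$ is a $\pi'$-number; hence $|H|/|H\cap T|=1$, i.e.\ $H\le T$. Consequently $E\cap HT=E\cap T\le E_{sG}$, and the modular law yields $(HE/H)\cap(HT/H)=H(E\cap T)/H\le HE_{sG}/H\le (HE/H)_{s(G/H)}$, so $HT/H$ is the required supplement.

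The main obstacle is precisely the one just isolated: passage to the quotient in (iii) does not respect intersections naively, and the entire force of $(|H|,|E|)=1$ goes into forcing $E\cap HT=E\cap T$ (equivalently $H\le T$). Once that order-theoretic point is in hand, each part reduces to the bookkeeping facts (a)--(c) together with two applications of the modular (Dedekind) law, and I expect no further difficulty.
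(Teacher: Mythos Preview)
The paper does not supply its own proof of this lemma: it is quoted verbatim from \cite[Lemma~2.10]{On-weakly-s-permutable-subgroups} and used as a black box, so there is no argument here to compare against. Your proposal is therefore being judged on its own merits, and it is correct. The reduction to the three standard closure properties (a)--(c) of s-permutability is exactly the right framework, and your handling of (i) and (ii) via the Dedekind modular law is clean and complete; in particular the observation that $H_{sG}\le H_{sK}$ for $H\le K\le G$ and that $(K/H)_{s(G/H)}=K_{sG}/H$ when $H\trianglelefteq G$ and $H\le K$ are precisely the bookkeeping identities one needs.

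Your treatment of (iii) is also correct and identifies the genuine issue: controlling $E\cap HT$ rather than $E\cap T$. The order computation $|E\cap HT|=\dfrac{|H|}{|H\cap T|}\cdot|E\cap T|$, obtained from $E\cdot HT=G$ together with the product formula, forces $|H|/|H\cap T|=1$ by coprimality, whence $H\le T$ and $E\cap HT=E\cap T$. This is the standard way the hypothesis $(|H|,|E|)=1$ is consumed, and once it is in place the rest is immediate from (b). There are no gaps.
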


\begin{lemma}{\rm{(\cite[Lemma 2.8]{open-problems-related-X-permutability})}}\label{2.7}
Let $\frak{F}$ be a saturated formation and $P$ be a normal
$p$-subgroup of $G$. Then $P\subseteq Z_\infty^\frak{F}(G)$ if and
only if $P/\Phi(P)\subseteq Z_\infty^\frak{F}(G/\Phi (P))$.
 \end{lemma}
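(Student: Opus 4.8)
The plan is to establish both implications through the standard local description of the $\frak{F}$-hypercentre. Since $\frak{F}$ is saturated, by the Gasch\"utz--Lubeseder--Schmid theorem it admits a full integrated local definition $f$, so that $\frak{N}_p f(p) = f(p)$ for every prime $p$; I will use throughout that a $p$-chief factor $H/K$ of $G$ is $\frak{F}$-central precisely when $G/C_G(H/K) \in f(p)$, and that $P \subseteq Z_\infty^\frak{F}(G)$ means exactly that every $G$-chief factor lying below $P$ is $\frak{F}$-central.

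For the ``only if'' direction I would invoke the general fact that the $\frak{F}$-hypercentre passes to quotients, namely $Z_\infty^\frak{F}(G)N/N \subseteq Z_\infty^\frak{F}(G/N)$ for any $N \trianglelefteq G$. Applying this with $N = \Phi(P)$, which is normal in $G$ as a characteristic subgroup of the normal subgroup $P$, and using the hypothesis $P \subseteq Z_\infty^\frak{F}(G)$, gives $P/\Phi(P) = P\Phi(P)/\Phi(P) \subseteq Z_\infty^\frak{F}(G/\Phi(P))$.

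The converse carries the real content. Put $C = C_G(P/\Phi(P))$. I would first show $G/C \in f(p)$. Choose a chief series of $G$ refining $1 \le \Phi(P) \le P$ between $\Phi(P)$ and $P$, say $\Phi(P) = P_s < \cdots < P_0 = P$; by hypothesis each $G/C_G(P_{i-1}/P_i) \in f(p)$. The stabiliser $S = \bigcap_i C_G(P_{i-1}/P_i)$ is normal in $G$ and contains $C$, and $G/S$ embeds into the direct product of the groups $G/C_G(P_{i-1}/P_i)$, hence $G/S \in f(p)$ because $f(p)$ is a formation. Since $S$ acts on the elementary abelian group $P/\Phi(P)$ stabilising every factor of the series, $S/C$ is a $p$-group; being normal in $G/C$ it lies in $O_p(G/C)$, so $(G/C)/O_p(G/C)$ is a quotient of $G/S$ and lies in $f(p)$. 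Fullness of $f$ then yields $G/C \in f(p)$.

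Finally, let $H/K$ be an arbitrary $G$-chief factor with $K < H \le P$; the goal is $G/C_G(H/K) \in f(p)$. Here the two nonformal ingredients enter, and this is where I expect the main difficulty to lie. By the Burnside--Gasch\"utz theorem the kernel of $\mathrm{Aut}(P) \to \mathrm{Aut}(P/\Phi(P))$ is a $p$-group, so $C/C_G(P)$ is a $p$-group; as $C_G(P) \le C_G(H/K)$, the image $\bar C$ of $C$ in $G/C_G(H/K)$ is a normal $p$-subgroup. On the other hand $H/K$ is a faithful irreducible $\mathbb{F}_p[G/C_G(H/K)]$-module, which forces $O_p(G/C_G(H/K)) = 1$; hence $\bar C = 1$, i.e.\ $C \le C_G(H/K)$. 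Thus $G/C_G(H/K)$ is a quotient of $G/C \in f(p)$ and so lies in $f(p)$, making $H/K$ $\frak{F}$-central. Since every chief factor below $P$ is $\frak{F}$-central, $P \subseteq Z_\infty^\frak{F}(G)$, completing the argument.
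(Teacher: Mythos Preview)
Your argument is correct and follows the standard route via the local definition of a saturated formation together with the Burnside--Gasch\"utz fact that the kernel of $\mathrm{Aut}(P)\to\mathrm{Aut}(P/\Phi(P))$ is a $p$-group. Note, however, that the paper does not give its own proof of this lemma: it is quoted verbatim from \cite[Lemma~2.8]{open-problems-related-X-permutability} and used as a black box. So there is no in-paper proof to compare against; your proof simply supplies what the paper omits by citation.
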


\begin{lemma}{\rm(\cite[Lemma 2.7]{On-weakly-s-permutable-subgroups})} \label{spn}
    If $H$ is s-permutable in a group $G$ and $H$ is a $p$-group for some prime $p$, then
$O^p(G) \le N_G(H)$.
\end{lemma}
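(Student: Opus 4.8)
The plan is to reduce the statement to single Sylow subgroups and exploit the fact that $O^p(G)$ is generated by the $p'$-Sylow subgroups of $G$. Set $M=\langle Q : q\neq p,\ Q\in\operatorname{Syl}_q(G)\rangle$. Then $M$ is normal in $G$, being generated by a conjugation-closed family of subgroups; moreover $G/M$ is a $p$-group, so $O^p(G)\le M$, while each generator consists of $p'$-elements, so $M\le O^p(G)$. Hence $M=O^p(G)$, and it suffices to prove that every Sylow $q$-subgroup $Q$ with $q\neq p$ satisfies $Q\le N_G(H)$. Fixing such a $Q$, s-permutability gives that $L:=HQ=QH$ is a subgroup; since $H$ is a $p$-group and $Q$ a $q$-group with $p\neq q$ we have $H\cap Q=1$, so $|L|=|H||Q|$ and therefore $H$ is a Sylow $p$-subgroup of $L$.

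The next step is to show $H\trianglelefteq L$. By Kegel's theorem \cite{Kegel}, an s-permutable subgroup of $G$ is subnormal in $G$, so $H$ is subnormal in $G$; intersecting a subnormal chain from $H$ up to $G$ with $L$ (using $H\le L$) shows that $H$ is subnormal in $L$ as well. Now a subnormal Sylow subgroup is automatically normal: writing $H=H_0\trianglelefteq H_1\trianglelefteq\cdots\trianglelefteq H_k=L$, the subgroup $H$ is a normal Sylow $p$-subgroup of $H_1$, hence its unique Sylow $p$-subgroup and so characteristic in $H_1$; climbing the chain one step at a time yields $H\trianglelefteq L$. Consequently $Q\le N_L(H)\le N_G(H)$, and since $Q$ ranged over all Sylow subgroups for primes $q\neq p$, we obtain $O^p(G)\le N_G(H)$, as required.

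The only non-routine ingredient is Kegel's subnormality theorem for s-permutable subgroups; the remaining facts (the generation of $O^p(G)$ by its $p'$-Sylow subgroups, the inheritance of subnormality by intermediate subgroups, and the normality of a subnormal Sylow subgroup) are elementary. If one wished to avoid the citation, the main obstacle would shift to proving directly that $H\trianglelefteq HQ$, that is, that $H$ is the unique Sylow $p$-subgroup of $HQ$; since this is precisely what invoking subnormality delivers cheaply, I would take the route above rather than attempt the direct normality argument.
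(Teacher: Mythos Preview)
Your proof is correct and is in fact the standard argument for this well-known fact. Note, however, that the paper does not give its own proof of this lemma: it is stated as a citation of \cite[Lemma 2.7]{On-weakly-s-permutable-subgroups} and used as a black box. So there is nothing to compare against in the paper itself; your write-up simply supplies the proof that the paper omits, and it follows the same line one finds in the original sources (Kegel's subnormality theorem together with the observation that $H$ is a subnormal Sylow $p$-subgroup of $HQ$ for every Sylow $q$-subgroup $Q$ with $q\neq p$).
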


\begin{lemma}{\rm(\cite[Lemma3.8.7]{GUO-BOOK}} \label{OP}
    Let $G$ be a $p$-solvable group. If $O_{p'}(G)=1$ and $O_p(G)\le H\le G$, then $O_{p'}(H)=1$
\end{lemma}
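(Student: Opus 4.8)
The statement to prove is: in a $p$-solvable group $G$ with $O_{p'}(G)=1$, if $O_p(G)\le H\le G$, then $O_{p'}(H)=1$. The plan is to reduce everything to the single structural input that governs $p$-solvable groups with trivial $p'$-core, namely the centralizer condition $C_G(O_p(G))\le O_p(G)$, and then run a short coprime-commutator argument inside $H$.

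First I would record the two subgroups of $H$ that matter and their normality. Since $O_p(G)\trianglelefteq G$ and $O_p(G)\le H$, the subgroup $O_p(G)$ is a normal $p$-subgroup of $H$. Let $Q=O_{p'}(H)$, which is characteristic, hence normal, in $H$. Both $Q$ and $O_p(G)$ are therefore normal in $H$, so their commutator satisfies $[Q,O_p(G)]\le Q\cap O_p(G)$. Because $Q$ is a $p'$-group and $O_p(G)$ is a $p$-group, their intersection is trivial, whence $[Q,O_p(G)]=1$. Thus $Q$ centralizes $O_p(G)$, i.e.\ $Q\le C_H(O_p(G))\le C_G(O_p(G))$.

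Next I would invoke the key structural fact. Since $G$ is $p$-solvable it is $p$-constrained, and with the hypothesis $O_{p'}(G)=1$ this yields $C_G(O_p(G))\le O_p(G)$ (the Hall--Higman Lemma 1.2.3, which is exactly the form recorded in the cited reference). Combining with the previous step gives $Q\le O_p(G)$. But $Q=O_{p'}(H)$ is a $p'$-group contained in the $p$-group $O_p(G)$, so $Q=1$, which is the desired conclusion $O_{p'}(H)=1$.

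The only nontrivial ingredient, and the step I would flag as the real content, is the centralizer inequality $C_G(O_p(G))\le O_p(G)$; everything else is a two-line coprime argument. If I did not wish to cite it, I would prove it directly by induction along a chief series of $G$: setting $C=C_G(O_p(G))$, the $p$-solvability lets one pass to $G/O_p(G)$ and use that a minimal normal subgroup of the quotient is either a $p$-group (contradicting maximality of $O_p(G)$) or a $p'$-group (contradicting $O_{p'}(G)=1$ after lifting), forcing any $p'$-part of $C$ to be trivial and the $p$-part to land in $O_p(G)$. Since the excerpt supplies this as a cited lemma, however, I would simply apply it and keep the proof to the three short steps above.
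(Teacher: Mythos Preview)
The paper does not prove this lemma at all; it is stated with a bare citation to \cite[Lemma~3.8.7]{GUO-BOOK} and no argument is given. Your proof is correct: once you observe that $O_{p'}(H)$ and $O_p(G)$ are both normal in $H$ and have coprime orders, the commutator vanishes and $O_{p'}(H)\le C_G(O_p(G))\le O_p(G)$ by Hall--Higman~1.2.3, forcing $O_{p'}(H)=1$. This is the standard route, and there is nothing in the paper to compare it against.
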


The following Lemma is well known.

\begin{lemma}\label{lesoc}
    Let $N$ be a nilpotent normal subgroup of $G$. If $N\cap \Phi(G)=1$ then $N\le $Soc$(G)$, that is,
    $N=N_1\times N_2\times\cdots\times N_r$, where
    $N_1, N_2,\cdots, N_r$ are minimal normal subgroups of $G$.
\end{lemma}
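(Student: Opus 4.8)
The plan is to induct on $|N|$ and peel off one minimal normal subgroup of $G$ at a time, using the hypothesis $N\cap\Phi(G)=1$ to supply a $G$-invariant complement inside $N$ at each stage. If $N=1$ the assertion is vacuous, so assume $N\neq 1$. The first move is to reduce to the case that $N$ is abelian. Since $N$ is nilpotent one has $N'\le\Phi(N)$, and since $N\trianglelefteq G$ the subgroup $\Phi(N)$ (being characteristic in $N$, hence normal in $G$) satisfies $\Phi(N)\le\Phi(G)$: indeed, a maximal subgroup $M$ of $G$ with $\Phi(N)\not\le M$ would give $G=M\Phi(N)$, whence $N=(N\cap M)\Phi(N)$ by Dedekind's law (as $\Phi(N)\le N$), and then $N=N\cap M\le M$ by the non-generator property of $\Phi(N)$, forcing the contradiction $\Phi(N)\le M$. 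Combining, $N'\le\Phi(N)\le N\cap\Phi(G)=1$, so $N$ is abelian.

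Next I would select a minimal normal subgroup $N_1$ of $G$ contained in $N$, obtained as a minimal member of the set of nontrivial normal subgroups of $G$ lying in $N$. Because $N_1\le N$ we have $N_1\cap\Phi(G)\le N\cap\Phi(G)=1$, so $N_1\not\le\Phi(G)$ and there exists a maximal subgroup $M$ of $G$ with $N_1\not\le M$. Maximality of $M$ together with normality of $N_1$ gives $G=N_1M$. Now $N_1\cap M$ is normalized by $M$ and, as $N_1$ is abelian, by $N_1$, hence by $G=N_1M$; since $N_1\cap M<N_1$, minimality of $N_1$ forces $N_1\cap M=1$, so that $M$ complements $N_1$ in $G$.

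The central step is to verify that $N_*:=N\cap M$ is a $G$-invariant complement to $N_1$ inside $N$ which again satisfies the hypotheses, so that induction applies. By Dedekind's modular law, $N=N\cap N_1M=N_1(N\cap M)=N_1N_*$, and $N_1\cap N_*=N_1\cap M=1$, so $N=N_1\times N_*$. Here abelianness is precisely what is needed: it makes $N$ normalize $N_*\le N$, while $M$ normalizes $N_*=N\cap M$, and since $G=N_1M=NM$ it follows that $G$ normalizes $N_*$, i.e. $N_*\trianglelefteq G$. Then $N_*$ is an abelian normal subgroup of $G$ with $N_*\cap\Phi(G)\le N\cap\Phi(G)=1$ and $|N_*|<|N|$, so by the inductive hypothesis $N_*$ is a direct product of minimal normal subgroups of $G$; together with $N=N_1\times N_*$ this exhibits $N$ as a direct product of minimal normal subgroups, that is, $N\le\mathrm{Soc}(G)$.

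The only genuine obstacle is the normality of $N_*$ in $G$, which is exactly where the reduction to abelian $N$ earns its keep; the remainder is routine bookkeeping with Dedekind's law and the non-generator property of the Frattini subgroup.
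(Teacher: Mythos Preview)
The paper does not actually prove this lemma; it is stated as well known without proof, so there is no argument to compare against. Your proof is correct: the reduction to abelian $N$ via $N'\le\Phi(N)\le\Phi(G)$ is the standard Gasch\"utz-type step, and the inductive splitting $N=N_1\times(N\cap M)$ with the normality of $N\cap M$ justified by abelianness of $N$ is exactly the right mechanism.
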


\begin{lemma}{\rm(\cite{Prime-power-index})} \label{prime-index}
    Let $G$ be a nonabelian simple group and $H$ a subgroup of $G$. If $|G:H|=p^a$, where $p$ is a prime. Then one of the following holds:\\
    $(i)$ $G=A_n$, $H\cong A_{n-1}$, where $n=p^a$;\\
    $(ii)$  $G=PSL_n(q),\ |G:H|=(q^n-1)/(q-1)=p^a$;\\
    $(iii)$  $G=PSL_n(11),\ H\cong A_5$;\\
    $(iv)$  $G=M_{23}$ and $H\cong M_{22}$ or $G=M_{11}$ and $H\cong M_{10}$;\\
    $(v)$  $G=PSU_4(2)$,the index of $H$ in $G$ is 27.

\end{lemma}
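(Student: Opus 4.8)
This is Guralnick's classification of subgroups of prime-power index, and any proof of it must rest on the classification of finite simple groups (CFSG); the plan is to reduce to the case of a maximal subgroup, recast the hypothesis as a statement about primitive permutation groups of prime-power degree, and then run through the families provided by CFSG. First I would note that if $H<M\le G$ with $M$ maximal, then $|G:M|$ divides $|G:H|=p^{a}$ and so is again a power of $p$; hence it suffices to classify the maximal subgroups of $p$-power index, the passage to a general $H$ being completed afterwards by induction on $|G|$. For $H=M$ maximal the coset action of $G$ on $G/H$ is a faithful primitive permutation representation of degree $p^{a}$, so $G$ is a primitive permutation group of prime-power degree; this is the structural fact I would exploit throughout.

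I would first dispose of the two combinatorial families. For $G=A_{n}$ I would use the O'Nan--Scott description of the maximal subgroups: the point stabilizer $A_{n-1}$ has index $n$, giving case $(i)$ precisely when $n=p^{a}$, whereas the intransitive stabilizers have index $\binom{n}{k}$ and the imprimitive and primitive maximal subgroups have still larger indices, none of which is a prime power apart from the small primitive exception $PSL_{2}(11)\le A_{11}$ with point stabilizer $A_{5}$, realized by case $(iii)$; the exclusion of the remaining indices is a short arithmetic argument about the prime divisors of binomial coefficients. For the sporadic groups I would read the maximal subgroups of prime-power index directly from their known subgroup structure, where the only instances are $M_{23}>M_{22}$ of index $23$ and $M_{11}>M_{10}$ of index $11$, which is case $(iv)$.

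The heart of the argument, and the step I expect to be the main obstacle, is the groups of Lie type, where I would separate parabolic from non-parabolic maximal subgroups. A parabolic subgroup contains a Borel and hence a full Sylow subgroup for the defining characteristic $\ell$, so its index is a Gaussian binomial coefficient, a product of factors of the shape $q^{i}-1$ coprime to $\ell$; to force such a product to be a power of a single prime $p\neq\ell$ I would invoke Zsygmondy's theorem on primitive prime divisors, by which distinct factors $q^{i}-1$ contribute distinct primes. This confines the parabolic case to the stabilizer of a point or a hyperplane in $PSL_{n}(q)$, of index $(q^{n}-1)/(q-1)=p^{a}$ (case $(ii)$), the small-rank analysis producing the single further exception $PSU_{4}(2)$ of index $27$ (case $(v)$). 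The non-parabolic maximal subgroups are harder and form the crux: I would bound their orders and again use Zsygmondy to show that their indices carry at least two distinct prime divisors, while keeping careful track of the exceptional isomorphisms among small groups of Lie type (and with alternating and sporadic groups) so that each simple group is counted exactly once.

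Finally I would assemble the cases and carry out the descent announced at the outset. In each configuration $(i)$--$(v)$ one checks that the maximal subgroup $M$ has no proper subgroup of index a power of the same prime $p$ --- for the alternating family because $p^{a}-1$ is never itself a power of $p$, and in the remaining families by the coprimality of the relevant index factors --- so that a general subgroup $H$ of $p$-power index must coincide with $M$. This produces exactly the list $(i)$--$(v)$.
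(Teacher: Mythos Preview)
The paper does not prove this lemma at all: it is stated with a citation to Guralnick's paper \cite{Prime-power-index} and used as a black box in the proof of Lemma~\ref{35}. So there is no ``paper's own proof'' to compare against; your proposal is essentially a sketch of Guralnick's original argument, not a comparison target.

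That said, your outline is broadly faithful to how Guralnick actually proceeds (reduce to a maximal subgroup, interpret as a primitive permutation group of prime-power degree, and run through CFSG using Zsigmondy-type arguments for the Lie-type cases). A couple of small corrections: in case~(iii) the simple group $G$ is $PSL_{2}(11)$ itself with $H\cong A_{5}$ of index $11$, not an embedding into $A_{11}$ as you wrote; and your final descent step is a bit too breezy --- Guralnick does check in each case that no proper subgroup of the listed $M$ has $p$-power index in $G$, but the reason in the alternating case is not quite that ``$p^{a}-1$ is never a power of $p$'' (one needs that $A_{p^{a}-1}$ has no subgroup of $p$-power index, which requires re-invoking the classification inductively, not just that observation). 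For the purposes of this paper, however, none of this matters: you should simply cite the result as the paper does.
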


\begin{lemma}\label{max}
    Let $G$ be a group, $p$ the minimal prime divisor of the order of $G$ and $P$ a Sylow $p$-subgroup of $G$. If every maximal subgroup of $P$ having no supersolvable supplement in $G$ is weakly s-supplemented in $G$ then $G$ is $p$-nilpotent.
\end{lemma}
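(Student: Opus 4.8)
The plan is a minimal‑counterexample argument. Suppose the lemma is false and let $G$ be a counterexample of smallest order; I shall show that $G$ is in fact $p$-nilpotent, a contradiction. Since a quotient of a supersolvable group is supersolvable and, by Lemma \ref{sk1}, weak $s$-supplementation passes to a quotient $G/N$ when either $N$ lies in the given subgroup or $N$ is coprime to it, and to overgroups of the given subgroup, the hypothesis of the lemma (with $p$ still the least prime) is inherited both by $G/O_{p'}(G)$ and by every subgroup of $G$ containing $P$. Hence, by the minimal choice of $G$: $O_{p'}(G)=1$ (otherwise $G/O_{p'}(G)$, and then $G$, would be $p$-nilpotent), and every proper subgroup of $G$ containing $P$ is $p$-nilpotent. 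Moreover $P$ is non‑cyclic, since a group whose Sylow subgroup at the least prime is cyclic is $p$-nilpotent; and if $P$ were abelian but not normal, then $N_G(P)<G$ would be $p$-nilpotent, so $N_G(P)=P\times O_{p'}(N_G(P))$ and $P\le Z(N_G(P))$, whence Burnside's normal $p$-complement theorem would give $G$ $p$-nilpotent. Thus $P$ abelian implies $P\trianglelefteq G$.

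Consider first the case $P\trianglelefteq G$. Here $G$ is $p$-solvable with $O_p(G)=P$, so by Lemma \ref{OP} every subgroup $X$ with $P\le X\le G$ has $O_{p'}(X)=1$; together with the fact that such proper $X$ are $p$-nilpotent this forces $X=P$, so $P$ is a maximal subgroup of $G$ and therefore $G=P\rtimes\langle x\rangle$ with $x$ of prime order $q>p$. If $\Phi(P)\ne1$ then $G/\Phi(P)$ is a smaller group satisfying the hypothesis, hence $p$-nilpotent, and since $P/\Phi(P)$ is normal there, $x$ centralizes $P/\Phi(P)$ and so (coprime action) centralizes $P$ — impossible; thus $P$ is elementary abelian, an $\mathbb{F}_p\langle x\rangle$-module. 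If this module were reducible it would contain two distinct irreducible submodules $V_1,V_2$, and applying the previous reduction to $G/V_1$ and to $G/V_2$ would give $[P,x]\le V_1\cap V_2=1$, again impossible; hence $P$ is a minimal normal subgroup of $G$. Now fix a maximal subgroup $P_1$ of $P$. Any $s$-permutable $p$-subgroup of $G$ lying in $P_1$ is, by Lemma \ref{spn}, normalized by $O^p(G)\supseteq\langle x\rangle$ and, being inside the abelian $P$, by $P$; it is therefore normal in $G=P\langle x\rangle$, hence trivial (as $P$ is minimal normal and $P_1<P$), so $(P_1)_{sG}=1$. The hypothesis now gives either a supersolvable supplement $T$ of $P_1$ in $G$ or a complement $T$ of $P_1$ in $G$. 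In either case $T$ has $p'$-part of order $q$ and nontrivial Sylow $p$-subgroup, and $T$ is $p$-nilpotent (being supersolvable with $p$ least, or having a cyclic Sylow $p$-subgroup of order $p$). Put $K=O_{p'}(T)\trianglelefteq T$; then $K$ is a complement to $P$ in $G$, so a Dedekind computation yields $N_G(K)=C_P(K)\times K$ with $C_P(K)$ normal in $G$. Minimality of $P$ leaves $C_P(K)=1$ (if $C_P(K)=P$ then $G=P\times K$ is $p$-nilpotent), so $N_G(K)=K$ — contradicting that a nontrivial Sylow $p$-subgroup of $T$ normalizes $K$. This settles the case $P\trianglelefteq G$.

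It remains to deal with $P$ non‑abelian, in which case $P$ is not normal in $G$ and $G$ need not be $p$-solvable; this is the main obstacle. Let $N$ be a minimal normal subgroup of $G$; since $O_{p'}(G)=1$, $p$ divides $|N|$. If $N$ is a $p$-group one is driven back toward the normal‑$p$-subgroup analysis (and ultimately the previous case); if $N=S_1\times\cdots\times S_t$ with the $S_i$ nonabelian simple, then for a suitable maximal subgroup $P_1$ of $P$ the supplement $T$ provided by the hypothesis satisfies $|N:N\cap T|$ a power of $p$, so each $S_i$ has a proper subgroup of $p$-power index, and Lemma \ref{prime-index} limits $S_i$ to $A_{p^a}$, a $PSL_n(q)$ with $(q^n-1)/(q-1)$ a prime power, $PSL_2(11)$, $M_{11}$, $M_{23}$, or $PSU_4(2)$. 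Running through this short list together with the known structure of those $p$-power-index subgroups, one shows $G$ cannot be a counterexample — in effect $G$ turns out to be $p$-solvable, returning us to the normal case above. Checking that list, and choosing the right maximal subgroup of $P$ to test, is where I expect the real work to be; the rest is the minimal‑counterexample scaffolding set up above.
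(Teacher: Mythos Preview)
The paper does not actually prove this lemma: its ``proof'' is the single sentence that the result follows directly from Theorem~C of \cite{Guo-Xie-Li}. So there is no argument in the paper to compare your attempt against; you are attempting something the paper deliberately outsources.

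Your minimal-counterexample scaffolding is sound, and the case $P\trianglelefteq G$ is handled correctly and completely. (In particular, the step $N_G(K)=C_P(K)\times K$ is fine: in a split extension $P\rtimes K$ with $P$ normal, any element of $P$ normalising the complement $K$ must centralise it, regardless of whether $p\mid q-1$.) The Burnside reduction forcing $P\trianglelefteq G$ whenever $P$ is abelian is also correct.

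The genuine gap is precisely where you say it is: the case $P$ not normal (hence non-abelian). Two concrete problems with your sketch. First, when the minimal normal subgroup $N$ is a $p$-group you write that one is ``driven back toward the normal-$p$-subgroup analysis''; but here $N<P$ with $P$ not normal, so the previous case does not apply and something more is needed (typically an inductive argument through $G/N$, which requires knowing $|N|$ is small enough for the hypothesis to descend). Second, in the nonabelian-$N$ case you must first ensure the supplement $T$ is proper: if every maximal $P_1$ happens to be $s$-permutable in $G$, then $T=G$ is allowed and your index argument says nothing; Lemma~\ref{spn} then only yields $O^p(G)\le N_G(P_1)$, which must be exploited separately. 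Even once $T<G$ is secured and Lemma~\ref{prime-index} is invoked, eliminating each family of simple groups (and the case $t>1$) is a real case analysis you have not carried out. In short: your write-up is a correct proof for the normal-Sylow case and an honest outline for the rest, but the latter is not yet a proof---which is presumably why the paper chose to cite \cite{Guo-Xie-Li} rather than argue directly.
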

\begin{proof}
It can be obtained directly from \cite[Theorem C]{Guo-Xie-Li}.

   \begin{lemma}{\rm
   (\cite[III, 5.2 and IV, 5.4]{Huppert-BOOK-I}) } \label{non-p}
   Suppose that $p$ is a prime and $G$ is a
minimal non-$p$-nilpotent group. Then\\
$(i)$ $G$ has a normal Sylow $p$-subgroup $P$   and $G = PQ$,
where $Q$ is a non-normal cyclic $q$-subgroup for some prime $q\ne p$.\\
$(ii)$ $P/\Phi(P)$ is a minimal normal subgroup of $G/\Phi(P)$.\\
$(iii)$ If $P$ is abelian or $p > 2$, then exp$(P) = p$.\\
$(iv)$ If P is non-abelian and $p = 2$, then exp$(P) = 4$.
   \end{lemma}

\end{proof}

\begin{lemma}\label{minimal}
Let $G$ be a group, $p$ the minimal prime divisor of the order of $G$ and $P$ a Sylow $p$-subgroup of $G$. If every  subgroup  of $P$ of order
$p$ or 4(when $P$ is a non abelian 2-group) having no supersolvable supplement in $G$ is weakly s-supplemented in $G$ then $G$ is $p$-nilpotent.
\end{lemma}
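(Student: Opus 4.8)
The plan is to argue by contradiction: let $G$ be a counterexample of smallest order, and first reduce to the case where $G$ is minimal non-$p$-nilpotent. If $M<G$ and $p\nmid|M|$ then $M$ is trivially $p$-nilpotent; otherwise $p$ is still the least prime divisor of $|M|$, and choosing a Sylow $p$-subgroup $P_0$ of $M$ inside a Sylow $p$-subgroup $P$ of $G$ one notes that if $P_0$ is a non-abelian $2$-group then so is $P$. If $H\le P_0$ has order $p$, or order $4$ with $P_0$ a non-abelian $2$-group, and $H$ has no supersolvable supplement in $M$, then $H$ has none in $G$ either, since a supersolvable supplement $T$ of $H$ in $G$ gives the supersolvable supplement $M\cap T$ of $H$ in $M$ via the Dedekind law $M=M\cap HT=H(M\cap T)$; hence $H$ is weakly s-supplemented in $G$, and therefore in $M$ by Lemma~\ref{sk1}(ii). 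So $M$ inherits the hypothesis and is $p$-nilpotent by minimality of $G$, and thus $G$ is minimal non-$p$-nilpotent. By Lemma~\ref{non-p}, $G=PQ$ with $P$ a normal (necessarily non-cyclic) Sylow $p$-subgroup, $Q$ a non-normal Sylow $q$-subgroup, $q\ne p$, $P/\Phi(P)$ a minimal normal subgroup of $G/\Phi(P)$, and $\exp P=p$ unless $P$ is a non-abelian $2$-group, in which case $\exp P=4$. If $O^p(G)<G$ then it is $p$-nilpotent, so its normal Hall $p'$-subgroup (a Sylow $q$-subgroup of $G$) is characteristic in it and hence normal in $G$, which is false; so $O^p(G)=G$, and in particular $G$ has no subgroup of index $p$ (nor of index $2$ when $p=2$), such a subgroup being normal with $p$-group quotient.

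The key device is this: for $x\in P\setminus\Phi(P)$, since $[P,P]\le\Phi(P)$ the conjugation action of $G$ on $V:=P/\Phi(P)$ factors through $G/P\cong Q$, and $V$ is an irreducible $\mathbb{F}_pQ$-module; as $x\Phi(P)\ne 0$ its $Q$-orbit spans $V$, so $\langle x^G\rangle\Phi(P)=P$ and hence $\langle x^G\rangle=P$. Therefore, \emph{if $\langle x\rangle$ were normal in $G$ then $P=\langle x\rangle$ would be cyclic}, a contradiction. Now $|\langle x\rangle|$ is $p$ or $4$, and in the latter case $P$ is a non-abelian $2$-group, so the hypothesis applies to $H:=\langle x\rangle$. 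If $H$ has a supersolvable supplement $T$, then $|G:T|$ divides $|H|$; index $1$ is impossible because a supersolvable group whose least prime divisor is $p$ is $p$-nilpotent, and indices $p$ and (if $|H|=4$) $2$ are impossible since $G$ has no subgroup of those indices, leaving only $|G:T|=4$, which forces $H\cap T=1$. If instead $H$ is weakly s-supplemented, say $G=HT$ with $H\cap T\le H_{sG}$, then either $H\le T=G$, so $H_{sG}=H$ and hence (as $H$ is cyclic of order $p$ or $4$) $H$ is s-permutable in $G$, whence $O^p(G)\le N_G(H)$ by Lemma~\ref{spn}, so $H$ is normal in $G$ — a contradiction; or $H\cap T<H$, so $|G:T|\in\{p,2,4\}$, and again only $|G:T|=4$ with $H\cap T=1$ survives. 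When $\exp P=p$, every element of $P\setminus\Phi(P)$ has order $p$, so this analysis already produces a contradiction and proves the theorem in that case.

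It remains to treat $\exp P=4$, where $p=2$ and $P$ is a non-abelian $2$-group — and this is the main obstacle. Applying the analysis of the previous paragraph to an involution of $P\setminus\Phi(P)$ would give a contradiction, so every element of $P\setminus\Phi(P)$ has order $4$; fix such an $x$, so $\langle x\rangle\cong C_4$. By the above, the only case not yet disposed of is a supplement $T$ with $\langle x\rangle\cap T=1$ and $|G:T|=4$, and here the weak-supplement hypothesis gives essentially no information about $T$; the idea is to bring in a second subgroup. Apply the hypothesis to $\langle x^2\rangle$, a subgroup of $P$ of order $2$: a supersolvable supplement is excluded exactly as before, so $\langle x^2\rangle$ is weakly s-supplemented; since $G$ has no subgroup of index $2$, the only possibility is that $\langle x^2\rangle$ is s-permutable in $G$, whence $\langle x^2\rangle$ is normal in $G$ by Lemma~\ref{spn} together with $O^p(G)=G$, and then $\langle x^2\rangle\le Z(G)$ because $C_2$ has no nontrivial automorphisms. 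But then $\langle x^2\rangle T$ is a subgroup of $G$, and $\langle x^2\rangle\cap T\le\langle x\rangle\cap T=1$ gives $|\langle x^2\rangle T|=2|T|=|G|/2$, a subgroup of index $2$, contradicting $O^p(G)=G$. This last contradiction finishes the proof; everything apart from this final step is routine once the reduction to minimal non-$p$-nilpotent groups and the identity $O^p(G)=G$ are established.
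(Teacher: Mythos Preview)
Your proof is correct and follows the same overall strategy as the paper: reduce to a minimal non-$p$-nilpotent group $G=P\rtimes Q$, pick $x\in P\setminus\Phi(P)$, and derive a contradiction from the hypothesis applied to $\langle x\rangle$. The paper argues that a proper supplement $T$ of $\langle x\rangle$ with $|G:T|=p$ is normal and $p$-nilpotent, forcing $G$ to be $p$-nilpotent; your formulation via $O^p(G)=G$ (hence no subgroup of index $p$) is an equivalent packaging of the same idea.

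The one place your argument genuinely adds content is the $\exp P=4$ case. The paper dispatches this in a single sentence (``considering $H\Phi(P)/\Phi(P)$ \ldots\ a contradiction can also be obtained by a similar argument''), whereas you give an explicit and clean finish: first rule out involutions in $P\setminus\Phi(P)$, then for $x$ of order~$4$ obtain a complement $T$ of index~$4$, apply the hypothesis to $\langle x^2\rangle$ to get $\langle x^2\rangle\trianglelefteq G$, and produce the forbidden index-$2$ subgroup $\langle x^2\rangle T$. This is a nice self-contained argument that makes the order-$4$ case fully rigorous.
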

\begin{proof}
  By Lemma \ref{sk1}, one can verify that the hypotheses are subgroups closed. Thus if $G$ is not $p$-nilpotent then we can assume that
  $G$ is a a minimal non-$p$-nilpotent group,  and hence, by Lemma \ref{non-p},
    $G=P\rtimes Q$, where $Q$ is a cyclic $q$-group for some prime $q$,
    $P/\Phi(P)$ is a chief factor of $G$ and exp$P=p$ or 4 (when $P$ is noncyclic 2-group). Let $a\in P\setminus\Phi(P)$ and $H=\langle a\rangle$.
    Then $|H|=p$ or 4 (when $P$ is a nonabelian 2-group). Thus $H$ either has a supersolvable supplement in $G$ or is weakly s-supplemented in $G$.
    Assume $|H|=2$. If $H$
    has a complement $T$ in $G$, then $T$ is $p$-nilpotent since $G$ is minimal non-$p$-nilpotent group.
   Since $|G:T|=|HT:T|=|H|=p$,  $T\unlhd G$. This induces that $G$ is $p$-nilpotent, a contradiction.
   Thus $G$
  is the only supplement of $H$ in $G$. If $H$ has a
  supersolvable supplement in $G$, then $G$ is supersolvable and so is $p$-nilpotent since
   $p$ is minimal. If $H$ is weakly s-supplemented in $G$ then $H$ is s-permutable in $G$ and hence $H\Phi(P)/\Phi(P)$ is s-permutable in $G/\Phi(P)$.
 It follows from Lemma \ref{spn} that $H\Phi(P)/\Phi(P)\unlhd G/\Phi(P)$ and so $P/\Phi(P)=H\Phi(P)$ is cyclic of order $p$. Hence $|G:Q\Phi(P)|=p$
and so $Q\phi(P)\unlhd G$. This implies that $G/\Phi(P)$ is cyclic and so is $G$, a contradiction.
If $|H|=4$, considering the subgroup $H\Phi(P)/\Phi(P)$ in $G/\Phi(P)$, a contradiction can also
be obtained by a similar argument. Therefore the lemma holds.
\end{proof}

\section{Proof of Theorem \ref{B}}
\begin{lemma}\label{31}
    Let $P$ be a normal $p$-subgroup of $G$ with $P\cap \Phi(G)=1$. Assume that $D$ is a subgroup of $P$ with $1<D<P$. If every subgroup of order $|D|$
    of $P$ having no supersolvable supplement in $G$ is weakly s-supplement in $G$, then $P=P_1\times P_2\times\cdots\times P_r$, where $P_1,  P_2, \cdots, P_r$ are all minimal normal in $G$ of same order and  $\iota(D)=m\iota(P_i)$, that is, $|D|=|P_i|^m$  for some positive integer $m$, $i=1,2, \cdots,\ r$.
\end{lemma}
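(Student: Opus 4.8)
The plan is to analyze the action of $G$ on $P$ using the fact that $P \cap \Phi(G) = 1$. First, since $P$ is a normal nilpotent (indeed $p$-) subgroup of $G$ meeting $\Phi(G)$ trivially, Lemma \ref{lesoc} gives immediately that $P = P_1 \times P_2 \times \cdots \times P_r$ where the $P_i$ are minimal normal subgroups of $G$; since $P$ is a $p$-group, each $P_i$ is an elementary abelian $p$-group. The remaining content is therefore two-fold: that the $P_i$ all have the same order, and that $|D|$ is a power of that common order. I would handle the hypothesis by splitting on whether $|D| \le |P_i|$ for the ``smallest'' $P_i$ or not, and by choosing test subgroups $H \le P$ of order $|D|$ cleverly so that the weak s-supplementation hypothesis forces strong normality constraints.

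The key step: take any subgroup $H$ of $P$ with $|H| = |D|$. If $H$ has a supersolvable supplement $T$ in $G$, then $G = HT$ with $T$ supersolvable, so $G/T_G$ embeds appropriately; but more usefully one argues that then every chief factor of $G$ below $P$ that is ``covered'' suitably has order $p$, which (combined with $P_i$ being minimal normal) would force $|P_i| = p$ — I expect one can rule out the mixed situation. If instead $H$ is weakly s-supplemented, there is $T$ with $G = HT$ and $H \cap T \le H_{sG}$; then $|G:T|$ divides $|H| = |D|$, and $H_{sG}$ is generated by s-permutable subgroups, each of which by Lemma \ref{spn} is normalized by $O^p(G)$, hence (being a $p$-group normalized by $O^p(G)$ and normal in a Sylow $p$-subgroup after the usual argument) is subnormal and in fact one shows $H_{sG} \trianglelefteq \langle P, O^p(G)\rangle$-type conclusions. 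The upshot I am aiming for is that if one chooses $H$ to be, say, a subgroup of $P$ not contained in any single $P_i$-complement, the supplement $T$ must contain a large part of $P$, and intersecting the minimal normal subgroups $P_i$ with $T$ (each $P_i \cap T$ is normal in $G$, hence $1$ or $P_i$) pins down exactly which $P_i$ lie in $T$; counting $|P : P\cap T| \le |D|$ then bounds how many and which $P_i$ can be ``outside,'' and running this over enough choices of $H$ forces all $|P_i|$ equal and $|D|$ a power of $|P_i|$.

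More precisely, the counting argument should go: let $|P_i| = p^{d_i}$. Using that for every subset of the $P_i$ whose product has order $\ge |D|$ one can find an $H$ of order $|D|$ inside it and transversal-like outside it, the constraint $|P : P \cap T| \le |D|$ for the resulting $T$ together with $P \cap T$ being a (normal, hence sub-product of the $P_i$) complemented piece forces: $|D|$ must be expressible as a product of some of the $p^{d_i}$, and this must hold for every choice — which is only possible when all $d_i$ are equal to a common value $d$ and $|D| = p^{md}$, i.e. $\iota(D) = m\,\iota(P_i)$. The main obstacle I anticipate is the ``supersolvable supplement'' alternative: one must show it cannot sneak in an exceptional configuration where some $P_i$ has order $p$ while another is larger; I expect this is dispatched by noting a supersolvable supplement $T$ of $H$ forces $G/T_G$ supersolvable so all chief factors of $G$ in that section are cyclic of order $p$, and then transferring this uniformity back to all the $P_j$ via the minimal normality and the fact that $G$ permutes isomorphic chief factors — but making this airtight, and correctly handling the extra order-$2|D|$ subgroups in the non-abelian $2$-group case (which here reduces to an order-$|D|$ analysis since $P$ is elementary abelian after the Lemma \ref{lesoc} step, so that subtlety should not actually arise), is where the care is needed.
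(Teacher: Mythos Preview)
Your opening is right and matches the paper: Lemma~\ref{lesoc} gives $P = P_1 \times \cdots \times P_r$ with each $P_i$ minimal normal, and since $P$ is elementary abelian the order-$2|D|$ clause indeed never arises. After that, though, your plan diverges from the paper and the divergent part is not a proof.

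The paper's argument has two crisp steps you are missing. First, it shows $|D| \ge |P_i|$ for every $i$: if $|D| < |P_i|$, pick $H \le P_i$ of order $|D|$ normal in a Sylow $p$-subgroup $G_p$; since $P_i$ is abelian minimal normal, any supplement $T$ of $H$ satisfies $1 \ne P_i \cap T \unlhd G$, forcing $T = G$; then either $G$ is supersolvable (so $|P_i| = p$, contradiction) or $H$ is s-permutable and Lemma~\ref{spn} gives $H \unlhd G_p O^p(G) = G$, contradicting minimality of $P_i$. Second, once some $|P_1| < |D|$, the paper passes to $G/P_1$: Lemma~\ref{sk1} carries the hypothesis over, $(P/P_1)\cap\Phi(G/P_1)=1$ persists, and induction on $|P|$ yields $|P_2|=\cdots=|P_r|$ and $|D|/|P_1| = |P_2|^{m_1}$; doing the same with $G/P_2$ (and a short $r=2$ case) finishes.

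Your ``counting argument'' is the gap. From weak s-supplementation of one $H$ you correctly extract a $T$ with $P\cap T \unlhd G$ and $|P:P\cap T|$ dividing $|D|$, but $P\cap T$ need not be a product of the \emph{given} $P_i$ (if some $P_i$ are $G$-isomorphic there are diagonal $G$-submodules), and even granting that $|P:P\cap T|$ is a product of some $|P_i|$'s, the sentence ``this must hold for every choice --- which is only possible when all $d_i$ are equal'' is precisely the conclusion you are trying to prove, not an argument for it. You also never isolate the base inequality $|D|\ge |P_i|$ cleanly; without it there is no anchor for any combinatorics. Replace the counting sketch with the paper's two-step argument: prove $|D|\ge|P_i|$ directly as above, then induct by passing to $G/P_i$.
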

\begin{proof}
    By Lemma \ref{lesoc}, $P=P_1\times P_2\times\cdots\times P_r$, where $P_1,  P_2, \cdots, P_r$ are all minimal normal in $G$. Assume that there is a $P_i$
    such that $|D|<|P_i|$. Then $P_i$ has a proper subgroup $H$ of order $|D|$. Moreover, by the property of $p$-groups we can choose $H$ to be  normal in some Sylow $p$-subgroup $G_p$
     of $G$ containing $P$. By the
hypotheses, $H$ either has a supersolvable supplement in $G$ or
is weakly s-supplement in $G$. Let $T$ be a supplement of $H$
in $G$. Then $G=HT=P_iT$ and $P_i\cap T\ne 1$ since $H$  is proper in $P_i$. As $P_i$ is abelian, $P_i
\cap T\unlhd P_iT=G$. But $P_i$ is minimal normal in $G$, so $P_i\cap T=P_i$ and thereby $T=G$. Thus
 $G$ is the only supplement of $H$ in $G$.
 If $H$ has a supersolvable supplement in $G$, then $G$ is supersolvable
 and $|P_i|=p$, which contradicts that $|D|<|P_i|$. If $H$ is weakly s-supplement
 in $G$ then $H=H\cap G$ is s-permutable in $G$. By Lemma \ref{spn}, $O^p(G)\le N_G(H)$.
It follows that $H\unlhd G=G_pO^p(G)$. This is nonsense for $P_i$ is minimal normal in $G$. Thus $|D|\ge |P_i|$ for any $i$.

If  $|P_i|=|D|$ for any $i$, then we can see that the conclusion holds. Assume $|P_i|<|D|$ for some $i$.
 Without loss of generality, we can assume that $i=1$. Clearly $P/P_1=P_2P_1/P_1\times\cdots\times P_rP_1/P_1$
 and $P/P_1\cap \Phi(G/P_1)=1$ by \cite[A,(9.11)]{D-H-BOOK}. By Lemma \ref{sk1}, one can verify that the
 hypotheses still hold on $G/P_1$. Thus  $|P_2P_1/P_1|=\cdots=|P_rP_1/P_1|$ and $|D|/|P_1|=|P_2P_1/P_1|^{m_1}$ for some positive integer $m_1$.
 It follows that $|P_2|=\cdots=|P_r|$ and $|D|/|P_1|=|P_2|^{m_1}$.
 In particularly, $|P_2|<|D|$ and the hypotheses also hold on
 $G/P_2$.  If $r\ge 3$, then $|P_1|=|P_3|=\cdots=|P_r|$. It follows that $|P_1|=|P_2|=|P_3|=\cdots=|P_r|$
 and $|D|=|P_1|^{m_1+1}=|P_1|^m$ where $m=m_1+1$. If $r=2$ then  $P/P_1\cong P_1P_2/P_1$ is minimal normal in $G/P_1$.
   Since the hypotheses holds on $G/ P_1$, we can find a contradiction as above.  Thus the lemma holds.

\end{proof}

\begin{corollary}\label{c32}
    Let $P$ be a normal $p$-subgroup of $G$ with $P\cap \Phi(G)=1$. Assume that $D$ is a subgroup of $P$ with $1<D<P$ and  every subgroup of order $|D|$
    of $P$ having no supersolvable supplement in $G$ is weakly s-supplement in $G$. If $(\iota(P),\iota (D))=1$
     or $(\iota(|P:D|),\iota (D))=1$ then $P\subseteq Z_\infty^{\frak U}(G)$.
\end{corollary}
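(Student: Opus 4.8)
The plan is to feed the hypotheses directly into Lemma \ref{31} to pin down the structure of $P$, and then use the two coprimality conditions to force the minimal normal constituents of $P$ to have prime order, after which membership in the $\frak{U}$-hypercentre is immediate.

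First I would verify that Lemma \ref{31} applies: $P$ is a normal $p$-subgroup of $G$, $P\cap\Phi(G)=1$, $1<D<P$, and every subgroup of $P$ of order $|D|$ having no supersolvable supplement in $G$ is weakly s-supplemented in $G$. Lemma \ref{31} then gives $P=P_1\times P_2\times\cdots\times P_r$ with all $P_i$ minimal normal in $G$ of a common order, say $|P_i|=p^{n}$, and $|D|=|P_i|^{m}=p^{nm}$ for some integer $m\ge 1$. Since $D$ is proper in $P$ we have $p^{nm}<p^{nr}$, hence $1\le m<r$ and in particular $r-m\ge 1$.

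Next I would translate the arithmetic. Here $\iota(P)=nr$, $\iota(D)=nm$ and $\iota(|P:D|)=n(r-m)$, so $(\iota(P),\iota(D))=n\gcd(r,m)$ and $(\iota(|P:D|),\iota(D))=n\gcd(r-m,m)=n\gcd(r,m)$. In either of the two cases in the statement this common value equals $1$, and since $n$ divides it we conclude $n=1$. Thus every $P_i$ is a minimal normal subgroup of $G$ of order $p$, and $P$ is elementary abelian of order $p^{r}$.

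Finally, since $P=P_1\oplus\cdots\oplus P_r$ admits a $G$-composition series all of whose factors have order $p$, every chief factor of $G$ lying inside $P$ has order $p$; for such a factor $H/K$ the quotient $G/C_G(H/K)$ embeds in the cyclic group $\operatorname{Aut}(C_p)$, so $H/K$ is $\frak{U}$-central in $G$. Hence every chief factor of $G$ below $P$ is $\frak{U}$-central, which is precisely the assertion $P\subseteq Z_\infty^{\frak U}(G)$. (Equivalently, one may refine $1<P_1<P_1P_2<\cdots<P$ to a chief chain of $G$ and invoke the standard characterisation of the $\frak{U}$-hypercentre, or observe that $\Phi(P)=1$ so Lemma \ref{2.7} is vacuous.) I do not expect a real obstacle here: the corollary is essentially bookkeeping on top of Lemma \ref{31}. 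The only points that need care are excluding the degenerate cases $r=1$ and $m=r$ (both ruled out by $D<P$), and checking that the chain through the $P_i$ really is a chief chain of $G$ so that the principle ``$\frak{U}$-central chief factors $\Rightarrow$ $\frak{U}$-hypercentre'' applies.
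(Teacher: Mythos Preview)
Your proposal is correct and follows essentially the same route as the paper: apply Lemma \ref{31} to obtain $P=P_1\times\cdots\times P_r$ with all $|P_i|$ equal and $|D|$ a power of $|P_i|$, observe that $\iota(P_i)$ divides both $\iota(P)$, $\iota(D)$ and $\iota(|P:D|)$, use the coprimality hypothesis to force $|P_i|=p$, and conclude $P\subseteq Z_\infty^{\frak U}(G)$. Your write-up is simply more explicit about the arithmetic (computing the gcds as $n\gcd(r,m)$) and about why the resulting prime-order chief factors yield $\frak U$-centrality, but no new idea is involved.
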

\begin{proof}
   By Lemma \ref{31}, $P=P_1\times P_2\times\cdots\times P_r$, where $P_1,  P_2, \cdots, P_r$
   are all minimal normal in $G$, and $\iota(P_i)$, $i=1, \cdots, r$, is
   a common divisor of $\iota(P),\iota (D)$ and $\iota(|P:D|)$. Thus if $(\iota(P),\iota (D))=1$
     or $(\iota(|P:D|),\iota (D))=1$ then $P\subseteq Z_\infty^{\frak U}(G)$.
\end{proof}

\begin{lemma}\label{32}
        Let $P$ be a normal $p$-subgroup of $G$  and $D$   a subgroup of $P$ with $1<D<P$. Assume that   every subgroup of order $|D|$ or $2|D|$
        (when $P$ is a nonabelian 2-group) of $P$ having no supersolvable supplement in $G$ is weakly s-supplement in $G$. If
        $P'< P\cap \Phi(G)$ or $|D|\le |P'|$, then $P\subseteq Z_\infty^{\frak U}(G)$.
\end{lemma}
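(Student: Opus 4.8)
The plan is a minimal-counterexample argument. Suppose $(G,P,D)$ is a counterexample with $|G|+|P|$ minimal; fix a Sylow $p$-subgroup $G_p$ of $G$ with $P\le G_p$, and write $\frak U$ for the class of supersolvable groups. Since $P\unlhd G$, Gaschütz's theorem gives $\Phi(P)\le\Phi(G)$, so $P'\le\Phi(P)\le P\cap\Phi(G)$; and if $G\in\frak U$ then $P\subseteq G=Z_\infty^{\frak U}(G)$, so I may assume $G\notin\frak U$. The recurring device is: whenever $H\le P$ has $|H|=|D|$ and $H\le\Phi(G)$, the inclusion $H\le\Phi(G)$ forces the only supplement of $H$ in $G$ to be $G$ itself; since $G\notin\frak U$, $H$ has no supersolvable supplement, so by the hypothesis $H$ is weakly $s$-supplemented in $G$, whence $H=H\cap G\le H_{sG}$ — that is, $H$ is generated by subgroups $s$-permutable in $G$ — and then Lemma \ref{spn} gives $O^p(G)\le N_G(H)$, so that $H\unlhd G$ as soon as $H\unlhd G_p$ (because $G=G_pO^p(G)$).

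First suppose $|D|\le|P\cap\Phi(G)|$; this covers the alternative $|D|\le|P'|$, as $|P'|\le|\Phi(P)|\le|P\cap\Phi(G)|$. Refine a $G$-chief series of $P$ through $P\cap\Phi(G)$, say $1=P_0<\dots<P_t=P\cap\Phi(G)<\dots<P_s=P$. Since $G_p$ acts as a $p$-group on each $G$-chief factor of $P$, it stabilizes a full flag there, so if some $P_i\le P\cap\Phi(G)$ has $|P_{i-1}|<|D|<|P_i|$ one may pick $H\unlhd G_p$ with $P_{i-1}<H<P_i$, $|H|=|D|$, $H\le\Phi(G)$, and the device makes $H\unlhd G$ — impossible as $P_{i-1},P_i$ are consecutive. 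Hence $|D|=|P_j|$ for some $j\le t$, so $N:=P_j\unlhd G$ has order $|D|$, lies in $P\cap\Phi(G)$, is generated by $s$-permutable subgroups of $G$, and (when $|D|\le|P'|$) can be taken inside $P'$. Replacing $D$ by $N$, one passes to $G/N$: using Lemma \ref{sk1} together with $\Phi(P/N)=\Phi(P)/N$ and $(P/N)'=P'N/N$ one checks that $G/N$ again satisfies the hypotheses of the lemma under one of its two alternatives, so minimality yields $P/N\subseteq Z_\infty^{\frak U}(G/N)$; on the other hand $N\subseteq Z_\infty^{\frak U}(G)$, which follows by reducing through $\Phi(N)$ via Lemma \ref{2.7} and using that $N$ is generated by $s$-permutable subgroups (Lemma \ref{spn}). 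Combining the two facts gives $P\subseteq Z_\infty^{\frak U}(G)$ — a contradiction.

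Now suppose $|D|>|P\cap\Phi(G)|$; then $|D|\le|P'|$ fails, so $P'<P\cap\Phi(G)$, and in particular $|D|>|\Phi(P)|$, so the weak $s$-supplementation hypothesis survives the quotient by $\Phi(P)$ once $D$ is enlarged to contain $\Phi(P)$. If $\Phi(P)<P\cap\Phi(G)$ one factors out $\Phi(P)$ (Lemma \ref{2.7}); the resulting group again satisfies the hypotheses under alternative (i) and is strictly smaller, contradicting minimality. Hence $P'<\Phi(P)=P\cap\Phi(G)$; if now $P'\ne 1$ one instead factors out $P'$ — the quotient satisfies alternative (i) (with $\Phi$ meeting the image of $P$ nontrivially) and is smaller, again a contradiction. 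So $P'=1<\Phi(P)=P\cap\Phi(G)$; factoring out $\Phi(P)$ by Lemma \ref{2.7} leaves $\bar P=P/\Phi(P)$ elementary abelian with $\bar P\cap\Phi(\bar G)=1$, so Lemma \ref{lesoc} presents $\bar P$ as a direct product of minimal normal subgroups of $\bar G$, and Lemma \ref{31} shows these have a common order $q$ with $|\bar D|=q^m$; it remains to prove $q=p$.

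The main obstacle is precisely this last point, and its analogue in the first case. The weak $s$-supplementation hypothesis is pinned to the single order $|D|$, so quotienting by a normal subgroup of that order (or by one of larger order) destroys it, and it must be re-established or bypassed by hand: in the first case one must verify that $G/N$ still meets one of the two alternatives of the lemma — here the freedom to choose $N\le P'$ when $|D|\le|P'|$ is what rescues the alternative $|D|\le|P'|$ — while in the second case one must rule out a common socle-order $q>p$ using only the structural hypothesis $P'<P\cap\Phi(G)$ (equivalently $\Phi(P)\ne P'$) together with Lemma \ref{31} (and, where applicable, Corollary \ref{c32}). Everything else is routine manipulation with Lemmas \ref{sk1}, \ref{2.7}, \ref{spn}, \ref{lesoc}, \ref{31}, and Corollary \ref{c32}.
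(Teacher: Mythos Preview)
Your proposal has a fundamental gap in the first case that you do not flag. You correctly locate a chief-series term $N=P_j\le P\cap\Phi(G)$ with $|N|=|D|$, but the passage to $G/N$ cannot work: the weak $s$-supplementation hypothesis concerns only subgroups of order $|D|$ (and possibly $2|D|$), so by Lemma~\ref{sk1}(i) it transfers to subgroups of $P/N$ of order $|D|/|N|=1$ --- there is no nontrivial $D'$ left in the quotient, and the lemma's premise $1<D'<P/N$ is unavailable. Your claim that $N\subseteq Z_\infty^{\frak{U}}(G)$ is equally unsupported: $N$ being normal (or generated by $s$-permutable subgroups) says nothing about the orders of the $G$-chief factors inside $N$, and your ``device'' only controls subgroups of the single order $|D|=|N|$, i.e.\ $N$ itself. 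You do flag the second case as incomplete, but the difficulty you name there (forcing $q=p$ after factoring out $\Phi(P)$) is not residual bookkeeping; it is the heart of the matter, and your reductions have discarded the structural information needed to settle it.

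The paper's route is organised differently and never quotients by a subgroup of order $|D|$. After establishing $|D|>p$, it \emph{eliminates} the alternative $|D|\le|P'|$ in a minimal counterexample (its step~(3)): reducing to the case where $P/(P\cap\Phi(G))$ is a single chief factor, it shows that \emph{every} subgroup $H\le P$ of order $|D|$ or $2|D|$ --- not only those inside $\Phi(G)$ --- has $G$ as its unique supplement (a proper supplement $T$ would make $T(P\cap\Phi(G))$ a proper complement to the chief factor, forcing $H(P\cap\Phi(G))=P$), so all such $H$ are $s$-permutable and \cite[Lemma~3.1]{On-Pi-property} gives $P\subseteq Z_\infty^{\frak{U}}(G)$ directly. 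In the subcase $P'<P\cap\Phi(G)$ the paper instead applies induction to $P\cap\Phi(G)$ to put it inside $Z_\infty^{\frak{U}}(G)$, extracts a minimal normal $N$ of order $p$, and quotients by \emph{that}; since $|N|=p<|D|$ the hypothesis genuinely survives with new order $|D|/p>1$. The remaining steps~(4)--(7) and the final contradiction form a lengthy reduction (to $P$ elementary abelian with a unique minimal normal $N\le P\cap\Phi(G)$ of order $<|D|$) that repeatedly combines Lemma~\ref{31} with hand-picked test subgroups $H$ of order $|D|$ meeting $N$ in a proper nontrivial $G_p$-normal subgroup; none of this machinery appears in your outline.
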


\begin{proof} Assume that the lemma does not hold and choose $P$ be a
counter example with minimal order. We prove the lemma via the following steps.

(1) {\it $P\nsubseteq \Phi(G)$ and $P/P\cap \Phi(G)=P_1/P\cap \Phi(G)\times P_2/P\cap \Phi(G)\times \cdots P_r/P\cap \Phi(G)$,
 where $P_i/P\cap \Phi(G),\ i=1,2,\cdots, r$,
  is minimal normal in $G/P\cap \Phi(G)$}.

If $P\subseteq \Phi(G)$, then for any subgroup $H$ of $P$, $G$ is the only supplement of $H$ in $G$. If there is a subgroup $H$
of order $|D|$ (or 2$|D|$ when $P$ is an nonabelian 2-group) has a supersolvable supplement in $G$ then $G$ is supersolvable and $P \subseteq Z_\infty^{\frak U}(G)$. Now assume
that every such subgroup $H$ has no supersolvable supplement in $G$. Then $H$ is weakly s-supplement in $G$ and hence $H=H\cap G$
is s-permutable in $G$. It follows from \cite[Lemma 3.1]{On-Pi-property} that $P \subseteq Z_\infty^{\frak U}(G)$.
This contradiction shows that $P\nsubseteq \Phi(G)$.
 Clearly, $(P/P\cap \Phi(G))\cap \Phi(G/P\cap \Phi(G))=1$. Hence by Lemma \ref{lesoc},
 $P/P\cap \Phi(G)=P_1/P\cap \Phi(G)\times P_2/P\cap \Phi(G)\times \cdots P_r/P\cap \Phi(G)$,
 where $P_i/P\cap \Phi(G),\ i=1,2,\cdots, r$,
  is minimal normal in $G/P\cap \Phi(G)$.

(2) $|D|>p$.

Assume that $|D|=p$. Then the hypotheses hold on $P_i$ for any $i\in \{1,\cdots,r\}$. If $r>1$,
then $P_i\subseteq Z_\infty^{\frak U}(G)$ for any $i$ and so $P\subseteq Z_\infty^{\frak U}(G)$.
Assume that $r=1$. Then $P/P\cap \Phi(G)$ is a $G$-chief factor. If $P\cap\Phi(G)=p$, then
$P\cap \Phi(G)\subseteq Z_\infty^{\frak U}(G)$
clearly holds. If $P\cap\Phi(G)\ne 1$
then the hypotheses  hold on $P\cap \Phi(G)$ and hence $P\cap \Phi(G)\subseteq Z_\infty^{\frak U}(G)$.
Thus $P\cap \Phi(G)\subseteq Z_\infty^{\frak U}(G)$
always holds. Let $H$ be any cyclic subgroup of order
$p$ or 4 (if $P$ is a nonabelian 2-group). If $H(P\cap \Phi(G))=P$
then $P/P\cap \Phi(G)\cong H/H\cap \Phi(G)$ is cyclic. Since $P\cap
\Phi(G)\subseteq Z_\infty^{\frak U}(G)$, we see that $P\subseteq
Z_\infty^{\frak U}(G)$. Assume that $H(P\cap \Phi(G))<P$ for any such
subgroup $H$. Let $T$ be any supplement of $H$ in $G$. We claim that $T=G$. If $T<G$, then $T (P\cap
\Phi(G))<G$. Since $PT=HT=G$ and $P/P\cap \Phi(G)$ is an abelian minimal normal subgroup of $G/P\cap\Phi(G)$,
$T (P\cap\Phi(G))/P\cap\Phi(G)$ is a complement of $P/P\cap\Phi(G)$. But $G/(P\cap\Phi(G))=HT/(P\cap\Phi(G))$,
so $P=H(P\cap\Phi(G))$, a contradiction.
Thus our claim holds. Consequently, $G$ is the only supplement of
$H$ in $G$. Hence $H$ is s-permutable in $G$ if $H$ has no
supersolvable supplement in $G$ by the hypotheses. By \cite[Lemma
3.1]{On-Pi-property}, $P \subseteq Z_\infty^{\frak U}(G)$, a contradiction. Hence (2) holds.

 (3) {\it $|P'|<|D|$.}

   Assume that $|D|\le
    |P'|$.  Since $P'\le P\cap \Phi(G)<P_i$, $|D|<|P_i|$, $i=1,2,\cdots, r$.
    To prove $P\subseteq Z_\infty^{\frak U}(G)$, it is sufficient to prove  that
     $P_i\subseteq Z_\infty^{\frak U}(G)$ for every $i$.
   If $P_i'=P'$, then $|D|<|P_i'|$. If $P_i'<P'$, then
    $P_i'<P_i\cap \Phi(G)$ since $P'\le\Phi(P)\le\Phi(G)$.  Therefore,  the hypotheses still hold on $(G, P_i)$.
    If $P_i<P$, then by induction on $|P|$, we have that $P_i \subseteq Z_\infty^{\frak U}(G)$.
    Now, assume that $P_i=P$.
    Then $P/P\cap \Phi(G)$ is a chief factor of $G$.

  Suppose  that $P'=P\cap \Phi(G)$. Then $P/P'$ is a chief factor of $G$. Let $H$ be a subgroup of order
   $|D|$ (or 2$|D|$ when $P$ is an nonabelian 2-group) of $P$.
   We claim that $G$ is the only supplement of $H$ in $G$.  Clearly,
     if $H\le P'\le \Phi(G)$, then $HT=G$ if and only if $T=G$. Assume that
     $H\nsubseteq P'$ and $T$ is a supplement
     of $H$ in $G$. If $T<G$, then $TP'$ is still a proper subgroup of $G$. Since
     $G=HT=PT$, we have that $(P/P')
     (TP'/P')=G/ P'$. As $P/P'$ is minimal normal
     in $G/P\cap \Phi(G)$ and
     $TP'/P'$ is proper in $G/P'$,
      $(P/P')\cap( TP'/P')=1$ and
      $TP'/P'$ is a complement of $P/P'$ in $G/P'$. But $G=HT$, so $TP'/P'$ is a
      supplement of $HP'/P'$ in $G/P'$. This induces that $HP'/P'=P/P'$ and
      so $H=P$, a contradiction. Thus our claim holds. It follows that all subgroups of order $|D|$
       (and 2$|D|$ when $P$ is an nonabelian 2-group) in $P$
      having no supersolvable supplement in $G$ are s-permutable in
      $G$ and hence $P \subseteq Z_\infty^{\frak U}(G)$ by \cite[Lemma 3.1]{On-Pi-property}.

Assume that $P'<P\cap \Phi(G)$. Then by induction on $P$, $P\cap
\Phi(G)\subseteq Z_\infty^{\frak U}(G)$. Let $N$ be a minimal normal
subgroup of $G$ contained in $P'< P\cap \Phi(G)$. Then $N$ is of order
$p$. By (2), $|D|>p$ and so the hypotheses still hold on $(G/N, P/N)$ by Lemma \ref{sk1}.
 Hence $P/N\subseteq Z_\infty^{\frak U}(G/N)$.
It follows directly from $|N|=p$ that $P\subseteq Z_\infty^{\frak
U}(G)$.  This contradicts the choice of $P$ and hence  $|P'|<|D|$.

(4) {\it $P'=1$ and $P$ is abelian}.

 Since $|D| \nleqslant |P'|$ by (3),  $P'<P\cap \Phi(G)$ by the hypotheses. Then
  it can be  verified that the hypotheses hold on $(G/P', P/P')$ by Lemma
     \ref{sk1}. If $P'\ne 1$, then $P/P'\subseteq Z_\infty^{\frak U}(G/P')$ by induction. Since
     $P'\le \Phi(P)$, It follows from Lemma \ref{2.7} that
     $P\subseteq Z_\infty^{\frak U}(G)$, which contradicts the choice of $P$. Thus (4) holds.

(5) {\it Let $N$ be a minimal normal subgroup of $G$ contained in $P\cap\Phi(G)$. Then $|N|<|D|$}

If $|D|< |N|$, then the hypotheses holds on $(G, N)$ and
hence $N\subseteq Z_\infty^{\frak U}(G)$ by (1).
It follows that $|N|=p$ and $|N|\le |D|$, a contradiction.

    Suppose that $|N|=|D|$. In this case, we claim that $N$ is cyclic. Let $L/N$ be a chief factor of $G_p$, where $G_p$
    is a Sylow $p$-subgroup of $G$.  Then $|L|=p|N|=p|D|$. Let $\mho=\langle x^p\mid x\in l\rangle$. Then $\mho\le\Phi(L)\le N$. If $\mho=N$,
then $L$ is cyclic since $L/\Phi(L)=L/N$ is cyclic. It follows that $N$ is cyclic. Assume that $\mho<N$.
 Clearly, $\mho\unlhd G_p$ and hence there is a maximal subgroup $M$ of $N$ such that $\mho\le M$ and $M\unlhd G_p$.
Choose an element $x\in L\setminus N$.
Then $x^p\in \mho\le M$ and  $H=\langle M,\ x\rangle$ is of order $p|M|=|N|=|D|$.   Let $T$ be
any supplement of $H$ in $G$. Since $P$ is abelian,
    $N\cap T\unlhd PT=HT=G$. If  $N\nsubseteq T$ then $N\cap T=1$ by the minimality of $N$. Thus $|NT|=|N||T|\ge|HT|=G$ and thereby, $G=NT$,
    which is contrary to $N\subseteq \Phi(G)$. Hence $N\subseteq T$.
    Assume that $H$ has a Supersolvable supplement $T$ in $G$. Then there is a
    cyclic subgroup $R$ of $N$ such that $R\unlhd T$. Since $P$ is abelian,
    $R\unlhd PT=HT=G$. By the minimality of $N$, we have that $N=R$ is cyclic. Assume that $H$ has no  Supersolvable supplement in $G$.
    Then $H$ is weakly s-supplement in $G$ by hypotheses.
     It follows that $M=H\cap T\cap N$ is s-permutable in $G$. Since $M\unlhd G_p$, $M\unlhd G=G_pO^p(G)$. Again by the minimality
     of $N$, we have that $M=1$ and so $N$ is cyclic. Thus our claim holds. This implies that $|N|=|D|=p$. But $|D|>p$ by (2), a contradiction.
     Hence $|N|<|D|$ and (5) holds.

    By the hypotheses, we can see that $P\cap \Phi(G)\ne 1$.
    In the following, $N$ denotes always  a minimal normal subgroup of $G$ contained in $P\cap\Phi(G)$.

(6) {\it $N$ is the unique minimal normal subgroup of $G$ contained in $P$.}

    Assume that this does not holds and  $P$ contains a minimal normal subgroup $L$ of $G$ different from $N$.
    Since $|N|<|D|$ by (5), by Lemma \ref{sk1}, every subgroup of order $|D|/|N|$ of $P/N$ having no supersolvable supplement in $G/N$ is
    weakly s-supplemented in $G/N$.  If $P/N\cap \Phi(G/N)\ne 1$ then the hypotheses still hold on $(G/N, P/N)$ and so
    $P/N\subseteq Z_\infty^{\frak U}(G/N)$. Hence $L$ is of order $p$
     and $|L|<|D|$ since $|D|>p$ by (2).
    If $P/N\cap \Phi(G/N)= 1$,  then  by Lemma \ref{31}, $P/N=P_1/N\times
    P_2/N\times\cdots\times P_r/N$, where $P_1/N,  P_2/N, \cdots, P_r/N$ are all minimal normal in $G/N$, $|P_1/N|=|P_2/N|=\cdots=|P_r/N|$ and
    $|D|/|N|=|P_1/N|^m$ for some positive integer $m$.
    In particularly, $|L|=|P_1/N|<|D|$. Thus $|L|<|D|$ holds in both case and hence
     the hypotheses hold on $(G/L, P/L)$ since, clearly, $NL/L$ is contained in $\Phi(G/L)$. Thus $P/L\subseteq
     Z_\infty^{\frak U}(G/L)$. In particularly, $N\cong NL/L$ is cyclic. If $P/N\subseteq Z_\infty^{\frak U}(G/N)$, then $P\subseteq
     Z_\infty^{\frak U}(G)$. Thus $P/N\cap \Phi(G/N)= 1$.
     Also, if $L$ is cyclic, then $P\subseteq Z_\infty^{\frak U}(G)$ since $P/L\subseteq Z_\infty^{\frak U}(G/L)$.
     Assume that $L$ is noncyclic and $P/N=P_1/N\times P_2/N\times\cdots\times P_r/N$, where $P_1/N,  P_2/N, \cdots, P_r/N$ are all minimal
     normal in $G/N$ and $|P_1/N|=|P_2/N|=\cdots=|P_r/N|$. Since $P/L\subseteq Z_\infty^{\frak U}(G/L)$,
     any $G$-chief factor between $L$ and $P$ is cyclic. Thus if $r\ne 1$, then by \cite[Theorem 1.6.8]{GUO-BOOK}, there is some $i$ such that $P_i/N\cong L$ is noncyclic and $P_j/N\cong Q/L$ is cyclic for any $j\ne i$,
     where $Q/L$ is a $G$-chief factor contained in $P$.
     But $|P_i/N|=|P_j/N|$, a contradiction. Hence $r=1$. This induces that
     $P/N$ is minimal normal in $G/N$ and  $|P/N|=|P_1/N|<|D|/|N|$, a contradiction. Thus (6) holds.

   (7) $\Phi(P)= 1$.

   Assume that $\Phi(P)\ne 1$. Then  $N\subseteq \Phi(P)$ by (6).
    If $P/N\subseteq Z_\infty^{\frak U}(G/N)$, then $P\subseteq
    Z_\infty^{\frak U}(G)$ by
    Lemma \ref{2.7}. Suppose that $P/N\nsubseteq Z_\infty^{\frak U}(G/N)$. Then $\Phi(G/N)\cap P/N=1$
    and $P/N=P_1/N\times P_2/N\times\cdots\times P_r/N$ by Lemma \ref{31}. Moreover, $|P_1/N|=|P_2/N|=\cdots=|P_r/N|$ and $|D|/|N|=
   |P_1/N|^m$ for some
    positive integer $m$. Assume that exp$P=p$. Then $\Phi(P)=\mho_1(P)=1$, where $\mho_1(P)=\{a^p\mid a\in P\}<P$.  This contradicts $\Phi(P)\ne 1$.
    Hence exp$P> p^2$. Then exp$(P/N)> p$ and so $\Phi(P/N)\ne 1$. This contradicts that $\Phi(G/N)\cap P/N=1$. Thus  exp$P=p^2$. If $D$
    is maximal in $P$, then $p=|P:D|=|P/N:D/N|=\frac{|P_1/N|^r}{|P_1/N|^m}$. Hence $r=m+1$ and $|P_1/N|=|P_2/N|=\cdots=|P_r/N|=p$. This induces
    that $P/N\subseteq Z_\infty^{\frak U}(G/N)$, a contradiction. Thus $|P:D|\ge p^2$. We claim that $N$ is cyclic. Otherwise, there must be a
    subgroup $H$ of order $|D|$ such that $H\cap N\ne 1$ since exp$P=p^2$ and $|P:D|\ge p^2$.  Moreover, we can choose that $H\cap N\unlhd G_p$,
     where $G_p$ is some Sylow $p$-subgroup of $G$.  Let $T$ be any supplement of $H$ in $G$. Then $P\cap T\unlhd PT=HT=G$. Clearly $P\cap T\ne 1$ so
     $N\subseteq T$ as $N$ is the only minimal normal subgroup of $G$
     contained in $P$. If $T$ is supersolvable, then $N$ has a cyclic
     subgroup $R$, which is
     normal in $T$. But $P$ is abelian, so $R$ is normal in $G$ and hence $N=R$ is cyclic.
      Assume that $H$ is weakly s-supplemented in $G$. Then $N\cap H\le T
     \cap H\le H_{sG}$ and so $H\cap N=H_{sG}\cap N$ is s-permutable in $G$. This induces that $O^p(G)\subseteq N_G(N\cap H)$ and so $N\cap H \unlhd G$, which contradicts the minimality of $N$. Thus $N$ is cyclic and our claim holds.

    Since $\Phi((G/N)\cap (P/N))=1$, we have that $N=\Phi(P)$. Let $P=\langle a_1\rangle\times\cdots\langle a_k\rangle\times\langle a_{k+1}\rangle\times\cdots
     \times\langle a_n\rangle$,  where $|a_1|=\cdots=|a_k|=p^2$ and $|a_{k+1}|=\cdots=|a_n|=p$. Then $k=1$ since $N=\Phi(P)$ is of order $p$. It follows
     that $\Omega=\Omega_1(P)=\{a\mid a^p=1\}$ is a maximal subgroup of $P$. Clearly, $\Omega\cap \Phi(G)=P\cap
     \Phi(G)\ne 1$. Note that $D$ is not maximal in $P$. Hence
      $|D|<|\Omega|$ and so $\Omega \subseteq
     Z_\infty^{\frak U}(G)$ by induction. But $P/\Omega$ is of order $p$, so $P\subseteq Z_\infty^{\frak U}(G)$. This contradiction shows that (7)
     holds.

     {\it The  final contradiction}

     Since $\Phi(P)=1$,
    $P$ is an elementary abelian $p$-group. In particularly, $N$ is complemented in $P$. Assume that $N$ is noncyclic. Then there
     exists a subgroup $H$ of order $|D|$ such that $1<H\cap N<N$ and
      $H\cap N\unlhd G_p$, where $G_p$ is a Sylow $p$-subgroup
      of $G$. As
    above argument, one can find a contradiction. Thus $N$ is cyclic. Let $H_1/N$ be any subgroup of $P/N$ of
    order $|D|$ and $H$ be a complement of $N$ in
     $H_1$. Then $H$ is of order $|D|$. If $H_1/N$ has no supersolvable supplement in $G/N$, then
     $H$ has no supersolvable supplement in $G$ and so is weakly s-supplement in $G$ by hypotheses. Let $T$
      be any supplement of $H$ in $G$ with $H\cap T\le H_{sG}$. Since $N\le T$ by above argument, $T/N$ is a supplement of $H_1/N$ in
     $G/N$ and $(H_1/N)\cap (T/N)=(H_1\cap T)/N=(H\cap T)N/N\le H_{sG}N/N$.  Hence $H_1/N$ is weakly s-supplemented
      in $G/N$.  If $P/N\cap \Phi(G/N)\ne 1$,
       then the hypotheses hold on $(G/N, P/N)$ and so
       $P/N\subseteq Z_\infty^{\frak U}(G/N)$. It follows from $|N|=p$ that $P\subseteq Z_\infty^{\frak U}(G)$.
        If $P/N\cap\Phi(G/N)=1$, then $P/N=Q_1/N\times Q_2/N\times\cdots\times Q_s/N$,
       where $Q_i/N$ is minimal normal in $G$, $i=1,
      \cdots, s$,  $|Q_1/N|=|Q_2/N|=\cdots=|Q_s/N|$ and $|D|=|Q_1/N|^{m'}$ for some integer $m'$ by Lemma \ref{31}. But we have that $P/N=P_1/N\times
      P_2/N\times\cdots\times P_r/N$, so $|Q_1/N|=|P_1/N|$ by \cite[Theorem 1.6.8]{GUO-BOOK}. Thus $|P_1/N|^m=|D|/p=|P_1/N|^{m'}/p$ for some integers
      $m$ and $m'$. This implies  that $m'=m+1$ and $|P_1/N|=p$. Therefore $P/N\subseteq Z_\infty^{\frak U}(G/N)$ and then, $P\subseteq Z_\infty^{\frak
      U}(G)$ is cyclic since $N$ is cyclic. The final contradiction  completes the proof.

\end{proof}

\begin{lemma} \label{35}
    Let $G$ be a  group and $p$ the minimal prime divisor of the order of $G$. Assume that $P$ is a Sylow $p$-subgroup of $G$ and $D$ is a
    nontrivial proper  subgroup of $P$. If every  subgroup of $P$ of order $|D|$ or $2|D|$ (when $P$ is a nonabelian 2-group) having no
    supersolvable supplement in $G$ is weakly s-supplemented in $G$, then $G$ is $p$-solvable and the $p$-length of $G$ is 1.
\end{lemma}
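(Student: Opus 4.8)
The plan is a minimal-counterexample argument: let $G$ be of least order among groups satisfying the hypotheses for which the conclusion fails, so $G$ is not $p$-soluble, or $l_p(G)>1$.

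\textbf{Preliminary reductions.} If $|D|=p$, the hypotheses are precisely those of Lemma~\ref{minimal}, so $G$ is $p$-nilpotent; since $p$ is the least prime divisor of $|G|$, the normal $p$-complement $O_{p'}(G)$ has all prime divisors $>p$, hence is soluble (of odd order if $p=2$; and $|G|$ is odd if $p$ is odd), so $G=O_{p'}(G)\rtimes P$ is $p$-soluble of $p$-length $1$ --- a contradiction. Thus $|D|>p$; likewise, if $D$ were maximal in $P$ then Lemma~\ref{max} would yield the same contradiction, so $p<|D|<|P|$ and $|P:D|>p$. Next I would remove $O_{p'}(G)$: if $R:=O_{p'}(G)\neq1$, then $PR/R\cong P$ is a Sylow $p$-subgroup of $G/R$ and $DR/R\cong D$ a nontrivial proper subgroup of it; moreover a supersoluble supplement of $H\le P$ in $G$ maps onto one of $HR/R$ in $G/R$, so if $HR/R$ has no supersoluble supplement in $G/R$ then neither does $H$ in $G$, hence $H$ is weakly $s$-supplemented in $G$, and then $HR/R$ is weakly $s$-supplemented in $G/R$ by Lemma~\ref{sk1}$(iii)$. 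So the hypotheses descend to $(G/R,DR/R)$, and minimality gives $G/R$ $p$-soluble of $p$-length $1$, whence the same for $G$ since $R$ is a $p'$-group --- a contradiction. Therefore $O_{p'}(G)=1$.

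\textbf{$G$ is $p$-soluble.} If $p$ is odd, $|G|$ is odd and $G$ is soluble; so assume $p=2$, where $2$-solubility is solubility, and suppose $G$ is not soluble. Since $O_{2'}(G)=1$, no minimal normal subgroup of $G$ is a $2'$-group; and by arguments parallel to Lemmas~\ref{31}--\ref{32} (passing to $G/N$ for a minimal normal $2$-subgroup $N$, with $|N|<|D|$, $|N|=|D|$, $|N|>|D|$ handled as there, using $|D|>p$ and taking the subgroups involved normal in a Sylow $2$-subgroup) one checks $G$ would then be soluble; so we may assume every minimal normal subgroup of $G$ is nonabelian. Fix $N=S_1\times\cdots\times S_t$ with the $S_i$ nonabelian simple, hence of even order, and put $P_0=P\cap N\neq1$, a Sylow $2$-subgroup of $N$. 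Pick $H\le P$ of order $|D|$ or $2|D|$ with $H\cap N$ as large as the orders allow and normal in a Sylow $2$-subgroup of $G$; by the hypothesis, either $H$ has a supersoluble supplement $T$ in $G$ (then $T\neq G$ and $|G:T|$ divides $|H|$, a $2$-power), or $H$ is weakly $s$-supplemented with supplement $T$, and then $|G:T|=|H:H\cap T|$ divides $|H|$, again a $2$-power. In either case $N\cap T$ has $2$-power index in $N$; one further checks (using, when $T$ is supersoluble, that $N$ is not supersoluble, and handling the remaining case separately) that $N\cap T$ may be taken proper in $N$. Then each projection $\pi_i(N\cap T)$ has $2$-power index in $S_i$, and since $|S_i|$ is even, $\pi_i(N\cap T)<S_i$ for some $i$; Lemma~\ref{prime-index} then restricts $S_i$ to a short explicit list and fixes the value of that index. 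Matching this fixed index against the $2$-power $|G:T|$ and against the location of $H\cap N$ inside $P_0$ produces the contradiction. \emph{The bookkeeping in this last step --- in particular arranging $N\cap T$ proper and extracting the numerical contradiction --- is what I expect to be the main obstacle.}

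\textbf{$l_p(G)=1$.} Now $G$ is $p$-soluble with $O_{p'}(G)=1$, so $V:=O_p(G)=F(G)\neq1$, $C_G(V)\le V$, and ``$l_p(G)=1$'' is equivalent to $P\trianglelefteq G$. Suppose $l_p(G)\ge2$; then $G/V$ is a nontrivial $p$-soluble group with $O_p(G/V)=1$ that is not a $p'$-group, so $O_{p'}(G/V)\neq1$. Choose $W\trianglelefteq G$ with $W/V=O_{p'}(G/V)$; by Lemma~\ref{OP}, $O_{p'}(W)=1$, so (Schur--Zassenhaus) $W=V\rtimes R$ with $1\neq R$ a $p'$-group, and $R$ acts faithfully on $V$ because $C_W(V)\le C_G(V)\le V$. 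Using $V\le P$ and this faithful coprime action, I would then analyze --- in the spirit of Lemmas~\ref{31}--\ref{32} and using Lemma~\ref{spn} --- the possible supplements of a well-chosen subgroup of $P$ of order $|D|$ or $2|D|$ lying in $V$, and show that neither a supersoluble supplement (necessarily of $p$-power index) nor a weak $s$-supplement of it can exist; this rules out such a $W$, forcing $P=V\trianglelefteq G$ and hence $l_p(G)=1$, completing the proof. (This last part mirrors the $p$-solubility argument but rests on $p$-soluble group theory and Lemma~\ref{OP} rather than on Lemma~\ref{prime-index}, and should be the easier of the two main cases.)
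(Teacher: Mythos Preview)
Your outline has the right ingredients (minimal counterexample, $O_{p'}(G)=1$, Guralnick), but the decision to separate ``$G$ is $p$-soluble'' from ``$l_p(G)=1$'' creates real gaps that the paper avoids by a different organization.

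\textbf{The $|N|=|D|$ case is missing.} In your $p$-solubility step you dispose of a minimal normal $2$-subgroup $N$ by ``arguments parallel to Lemmas~\ref{31}--\ref{32}''. The cases $|N|<|D|$ (descend to $G/N$) and $|N|>|D|$ (force $s$-permutability of a proper subgroup of $N$) are fine, but when $|N|=|D|$ neither works: the hypotheses do not pass to $G/N$ (the threshold becomes trivial), and there is no proper subgroup of $N$ of order $|D|$ to test. Nothing you have written rules out a non-$p$-soluble $G$ with such an $N$. The same issue reappears in your $l_p(G)=1$ paragraph: you propose to pick a subgroup of order $|D|$ \emph{inside} $V=O_p(G)$, but $|V|$ could be at most $|D|$, and then the faithful coprime action of $R$ on $V$ gives you nothing to work with.

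\textbf{How the paper handles this.} The paper does \emph{not} split the two conclusions. It first proves (step~(2)) that every minimal normal $p$-subgroup $N$ satisfies $|N|=|D|$, and then spends a substantial step~(3) showing directly that $O_p(G)=1$ in the minimal counterexample: it treats $\Phi(G)\cap O_p(G)\neq1$ and $=1$ separately, using the constraint $|N|=|D|$, Lemma~\ref{32}, and an explicit construction $H=\langle M,x\rangle$ with $\mho_1(L)\le M\lhd P$ to force $N$ cyclic and then reach $p$-nilpotence via Lemma~\ref{minimal}. This is precisely the work that your ``arguments parallel to Lemmas~\ref{31}--\ref{32}'' and your $l_p$ paragraph would have to replicate, and it is not short. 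Once $O_{p'}(G)=O_p(G)=1$, the paper proves $G$ is simple (step~(4)), which sharpens your nonabelian-minimal-normal analysis considerably: in the simple group one gets \emph{two} subgroups $T_1,T_2$ of $2$-power indices $|D|$ and $2|D|$, and Guralnick's list admits no simple group with subgroups of two distinct $2$-power indices --- that is the numerical contradiction you were looking for, and reducing to $G$ simple is what makes it clean.
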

\begin{proof}
Assume the lemma does not holds and let $G$ be a counter example of minimal order. Then $G$ is not $p$-nilpotent. We proceed the proof via
 the following steps.

(1) $O_{p'}(G)=1$

By Lemma \ref{sk1}, it can be verified that the hypotheses still hold on $G/O_{p'}(G)$ and if $O_{p'}(G)\ne 1$   then $G/O_{p'}(G)$ is $p$-solvable
and the $p$-length is 1. It follows  that $G$ is $p$-solvable and the $p$-length of $G$ is 1. So we can assume that $O_{p'}(G)=1$.

(2) {\it Let $N$ be a minimal normal subgroup of $G$. If $N$ is a $p$-group, then $|N|=|D|$.}

If $|D|<|N|$, then $N$ has a proper subgroup $H$ of order $|D|$. Since $N$ is minimal normal in $G$ and $N$ is abelian, $G$ is the only supplement of
$H$ in $G$. If $H$ has a supersolvable supplement in $G$, then
 $G$ is supersolvable and so the $p$-length of $G$ 1, which
contradicts the choice of $G$. Assume that every such subgroup $H$ is weakly s-supplemented in $G$. Then  $H$ is s-permutable in
$G$. Without loss of generality, we can assume that $H$ is normal
 in $P$. Then $H\unlhd \langle P, O^p(G)\rangle=G$,
 which contradicts the minimality of $N$.
Thus $|N|\le |D|$.

If $|N|<|D|$, then the hypotheses still hold on $G/N$. Therefore, $G/N$
 is $p$-solvable and the $p$-length is 1. It follows that $G$ is
$p$-solvable. Assume that $G$ has
another minimal normal subgroup $L$. Then, similarly,   $G/L$ is also
$p$-solvable and its $p$-length is 1.
This induces that $G\cong G/N\cap L$ is  $p$-solvable and the
$p$-length of it is 1, a contradiction. Thus $N$ is the unique
minimal normal
 subgroup of $G$. Since the class of all $p$-solvable groups with the $p$-length is 1 is a saturated formation, we have that $N\nsubseteq \Phi(G)$.
 It follows that $G=N\rtimes M$ for some maximal subgroup $M$ of $G$ and $N=O_p(G)$. Clearly, $O_{p'}(M)\ne 1$ and $|P\cap M|>|D|/|N|$.
  Let $P_1$ be an subgroup of $P\cap M$ of order $p|D|/|N|$and $M_1=P_1 O_{p'}(M)N$. Then $P_1N$ is a Sylow $p$-subgroup of $M_1$,
  and every maximal subgroup $H$ of $P_1N$ is of order $|D|$. Thus, if $H$ has no supersolvable supplement in $M_1$, then $H$ is
  weakly s-supplemented in $M_1$ by Lemma \ref{sk1}, and then by Lemma \ref{max}, $M_1$ is $p$-nilpotent. But $O_p(G)=N\le M_1$, so $O_{p'}(M_1)=1$ by
   Lemma \ref{OP}. Thus $M_1$ is a $p$-group. This contradiction shows that (2) holds.

  (3) $O_p(G)=1$

   Assume that $O_p(G)\ne 1$. Suppose $\Phi(G)\cap O_p(G)\ne 1$ and let $N$ be a minimal normal subgroup of $G$ contained in $\Phi(G)\cap O_p(G)$.
   By (2), $|N|=|D|$. If $N<O_p(G)$ then $O_p(G)\subseteq Z_\infty^{\frak U}(G)$ by Lemma \ref{32}. It follows that $O_p(G)\subseteq Z_\infty(G)$ since
   $p$ is the minimal prime divisor of $|G|$. Hence $N$ is cyclic and $|N|=|D|=p$. It follows from Lemma \ref{minimal} that $G$ is $p$-nilpotent, a contradiction. Assume $N=O_p(G)$.
  Let $L/N$ be a minimal normal subgroup of $P/N$. Then $|L|=p|N|=p|D|$ and $N$ is maximal in $L$. It follows that $\mho_1(L)=
  \langle x^p\mid x\in L\rangle\le \Phi(L)\le N$. If $\mho_1(L)=N$, then $N=\Phi(L)$ and hence $L$ is cyclic. This induces
   that $N$ is cyclic and so $|D|=|N|=p$. In this case, by Lemma \ref{minimal}, $G$ is $p$-nilpotent, a contradiction. Therefore, $\mho_1(L)<N$.
    Let $M$ be a maximal subgroup of $N$ such that $M\unlhd P$ and $\mho_1(L)\le M$. Choose $x\in L\setminus N$. Then $H=\langle M, x\rangle$ is a
    subgroup of order $|N|=|D|$. Let $T$ be a supplement of $H$ in $G$. Suppose $T<G$. Since $N\le \Phi(G)$,
     $TN<G$. But $|G:TN|=|TL:TN|=p\frac{|T\cap N|}{|T\cap L|}\le p$, so $TN$ is maximal in $G$ and $|G:NT|=p$.
   Thus $NT$ is normal in $G$ by the minimality of $p$.   If $N$ is not a Sylow $p$ subgroup of $NT$, then the hypotheses still hold on $NT$ and hence
   $NT$ is $p$-nilpotent. Since $|G:NT|=p$ and $NT\unlhd G$, $G$ is $p$-nilpotent. If $N$ is a Sylow $p$ subgroup of $NT$,
   then $D$ is maximal
   in $P$ and so $G$ is $p$-nilpotent by Lemma \ref{max}. This  contradiction shows that $G$ is the only supplement of $H$ in $G$. If $T$ is supersolvable, then
   $G=T$ is $p$-nilpotent, a contradiction.
   Assume that $H$ has no supersolvable supplement in $G$. Then $H$ is weakly s-supplemented in $G$ by the hypotheses and consequently,  $H=H\cap G$ is
    s-permutable in $G$. It follows that $M=H\cap N$ is s-permutable in $G$ and so $M\unlhd PO^p(G)=G$. By the minimality of $N$,
    we have that $M=1$ and $N$ is cyclic of order $p$. Still by Lemma \ref{minimal}, $G$ is $p$-nilpotent, a contradiction.

    Suppose that $\Phi(G)\cap O_p(G)= 1$. Then $O_p(G)$ is abelian. Let $N$ be a minimal normal subgroup of $G$
     contained in $O_p(G)$. Then $N$ is of order $|D|$ by (2) and is complemented in $G$ by $\Phi(G)\cap O_p(G)= 1$.
    Let $G=N\rtimes M$. Clearly, $O_p(M)\subseteq O_p(G)$. If $O_p(M)\ne 1$, then  $O_p(M)\unlhd MO_p(G)=G$. Let $L$
     be a minimal normal subgroup of $G$ contained in $O_p(M)$.
     Then the order of $L$  is  $|D|$ by (2). If $L$ is a Sylow $p$-subgroup of $M$,
     then $O_p(G)=NL$ is a Sylow $p$-subgroup of $G$. This is contrary to the choice of $G$.
      Thus the order of a Sylow $p$-subgroup of $M$ is greater than $|D|$
     and so, the hypotheses hold on $M$.  Therefore, $G/N\cong M$ is $p$-solvable and the $p$-length of it is 1. By the same
     argument, we have that
    $G/L$ is also $p$-solvable and the $p$-length of $G/L$ is 1. Therefore, $G\cong G/L\cap N$ is $p$-solvable and the $p$-length of it is 1,
    which contradicts the choice of $G$. Hence $O_p(M)=1$. Now assume
     that $O_{p'}(M)\ne 1$. Let $x\in P\cap M$ of order $p$ and $P_1=\langle N, x\rangle$.
    Then $|P_1|=p|D|$.
    Since $NO_{p'}(M)\unlhd G$, $X=O_{p'}(M)P_1=O_{p'}(G)NP_1$ is a subgroup of $G$ and  every maximal subgroup of
    $P_1$ is of order $|D|$. Hence by Lemma \ref{max} that $X$ is $p$-nilpotent and so is $NO_{p'}(M)$. This induces that
    $O_{p'}(M)$ char $NO_{p'}(M)\unlhd G$, which contradicts $O_{p'}(G)=1$. Thus $O_{p'}(M)=O_p(M)=1$ and in particularly,
    $G$ is not solvable. If $p>2$, then $G$ is of odd
    order and so is solvable, a contradiction. Hence $p=2$. Let $R$ be a minimal subnormal subgroup of $M$. Then $R$ is nonabelian
    and  $p=2$ is a divisor of $|R|$.
     Let $G_1=NR$. Then the hypotheses still hold on $G_1$ since $|N|=|D|<|G_{1_{p}}|$,
     where $G_{1_p}$ is a Sylow $p$-subgroup of $G_1$. If $G_1<G$, then $G_1$ is $p$-solvable and so is $R$, a contradiction. Hence
    $G=G_1=NR$. If the Sylow $p$-subgroup $P$ is abelian, then $P\cap R\unlhd P$ and so $(P\cap R)^G=(P\cap R)^R\le R$. Since $R$
    is simple, we have that $R=(P\cap R)^G\unlhd G$ and so $G=N\times R$. But $N$ is minimal normal in $G$, so $N$ is of order $p$. Thus $G$ is $p$-nilpotent
    by Lemma \ref{minimal} and $|N|=|D|$. Assume that $P$ is nonabelian. Then every subgroup $H$ of order $2|D|=2|N|$ having no supersolvable supplement
    in $G$ is weakly s-supplemented in $G$. By Lemma \ref{sk1}, it can be verified that every subgroup of order 2 of $G/N$
    having no supersolvable supplement  in $G/N$ is weakly s-supplemented in $G/N$. Since $R\cong RN/N=G/N$, every subgroup of order 2 of $R$
    having no supersolvable supplement  in $R$ is weakly s-supplemented in $R$. Let $H$ be a subgroup of $R$ of order 2 and $T$   a
     supplement of $H$ in $R$. If $T\ne R$, then $|R:T|=2$ and so $T\unlhd R$, which contradicts that $R$ is simple. Hence $T=R$.
     If $R$ supersolvable then $G$ is solvable, a contradiction. Hence $H$ is weakly s-supplemented in $R$. But $R$ is the only supplement
     of $H$ in $R$, so $H=H\cap R$ is s-permutable in $R$ and hence $O_p(R)\ne 1$, which contradicts that $R$ is simple.
     This contradiction shows that (3) holds.

(4) {\it $G$ is simple.}

Let $N$ be a minimal normal subgroup of $G$. By (1) and (3), $N$ is a nonabelian $pd$-group. If $|P\cap N|\le |D|$, then there is a subgroup $P_1$ of
$P$ with $N\cap P<P_1$ and $|P_1|=p|D|$. Let $X=NP_1$. Then every maximal subgroup of the Sylow $p$-subgroup $P\cap X$
of $X$ is of order $|D|$. By Lemma \ref{max}, $X$ is $p$-nilpotent and so is $N$, a contradiction.
If $|P\cap N|>|D|$, then the hypotheses hold on $N$. If $N<G$, then $N$ is $p$-solvable by the choice of $G$, a contradiction. Hence $N=G$
 and $G$ is simple.

 {\it The final contradiction }

 If $p>2$, then $G$ is solvable  and so is abelian, a contradiction. Hence $p=2$. Assume that $P$ is abelian and
 $H$ is a subgroup of $P$ of order $|D|$. If $H$ is weakly s-supplemented in $G$ and let $T$ be a supplement of $H$ in $G$ with $H\cap T
 \le H_{sG}$. Then $T\ne G$ since $H$ could not s-permutable in $G$. Clearly, $P\cap T\ne 1$
 since $D<P$.  Hence $1\ne (P\cap T)^G=(P\cap T)^{PT}=(P\cap T)^T
 \le T<G$, which contradicts (4).  Now consider that $P$ is nonabelian. Then every subgroup of order $|D|$
 or $2|D|$ having no supersolvable supplement in $G$ is weakly s-supplemented in $G$. By Lemma \ref{max},
 $D$ is not maximal in $P$ and so $2|D|<|P|$. If all subgroups of order $|D|$(or of order $2|D|$) have
 supersolvable supplements in $G$, then all maximal subgroups of $P$ have supersolvable supplements in $G$. By Lemma \ref{max}, $G$ is $p$-nilpotent, a contradiction. Hence there is a subgroup
 $H_1$ of order $|D|$ and a subgroup $H_2$ of order $2|D|$ are weakly s-supplemented in $G$. But $G$ is simple, so both $H_1$ and $H_2$ are supplement
 in $G$. Hence there are subgroups $T_1$ and $T_2$ with $|G:T_1|=|D|$ and $|G:T_2|=2|D|$. In view Lemma \ref{prime-index}, such a nonabelian
 simple group does not exist and our lemma holds.

\end{proof}

\noindent{\bf Proof of Theorem \ref{B}}.
We first prove that $E$ satisfies  Sylow tower property (see \cite[p5]{Weinstein}). In fact,
by Lemma \ref{sk1}, it can be verified that the hypotheses still holds on $E$. If $E<G$ then $E\in \frak{U}$ by induction and hence $E$satisfies  Sylow tower property in this case.
Now assume that $E=G$. Let $p$ be the minimal prime divisor  of $|G|$ and $P$ a Sylow $p$-subgroup of $G$. It follows from Lemma \ref{35} that
 $G$ is $p$-solvable and the $p$-length of $G$ is 1. Thus $G/O_{p'}(G)$ is $p$-closed. By Lemma \ref{sk1},  every  subgroup of $G/O_{p'}(G)$ of
 order $|D|$ or $2|D|$ (when $PO_{p'}(G)/O_{p'}(G)\cong P$ is a nonabelian 2-group) having no supersolvable supplement in $G/O_{p'}(G)$ is
  weakly s-supplemented in $G/O_{p'}(G)$.
  If $\Phi(P)\ne P'$, then $P'O_{p'}(G)/O_{p'}(G)<\Phi(P)O_{p'}(G)/O_{p'}(G)
   \le \Phi(G/O_{p'}(G)$. It follows from Lemma \ref{32} that $PO_{p'}(G)/O_{p'}(G)
  \subseteq Z_\infty^{\frak U}(G/O_{p'}(G))$.
   Assume that $P'=\Phi(P)$. If $|D|\le |\Phi(P)|$ then
  $|D|\le |P'|$. Again by Lemma \ref{32}, it holds that $PO_{p'}(G)/O_{p'}(G)
  \subseteq Z_\infty^{\frak U}(G/O_{p'}(G))$.  Assume that $|D|> |\Phi(P)|$. Then (iii) holds
  on $P$ and so
 $(\iota(P/P'),\iota(|D|/|P'|))=1$ or $(\iota(P),\iota(|P:D|))=1$.  By Corollary \ref{c32}, it can be verified  that $PO_{p'}(G)/P'O_{p'}(G)
  \subseteq Z_\infty^{\frak U}(G/P'O_{p'}(G))$, and it follows from Lemma \ref{2.7} that $PO_{p'}(G)/O_{p'}(G)
  \subseteq Z_\infty^{\frak U}(G/O_{p'}(G))$.   Since $p$ is the minimal prime divisor of $G$, we have that $G/O_{p'}(G)$
  is $p$-nilpotent and hence $G$ is $p$-closed. Since the hypotheses still hold on $O_{p'}(G)$, we have that $O_{p'}(G)$  satisfies  Sylow tower property,
  and consequently $G$  satisfies  Sylow tower property.

     Let $q$ be the maximal prime divisor of $|E|$ and $Q$ a Sylow $q$-subgroup of $E$. Then $Q$
 char $E\unlhd G$ and as above argument, $Q\subseteq Z_\infty^{\frak U}(G)$ since (i) or (ii) or (iii) holds on $Q$. We can also see that the hypotheses
 still holds on $G/Q$. Hence $G/Q\in \frak{F}$ by induction on the order of $G$. Since $Q\subseteq Z_\infty^{\frak U}
 (G)$ and $\frak {U}\subseteq \frak{F}$,
 we obtain that $G\in \frak{F}$. Therefore the theorem holds.

\section{Remarks, examples and some corollaries}

1. If $D$ is minimal or maximal in $P$, then $(\iota(P),\iota(D))=1$ or $(\iota(P),\iota(|P:D|))=1$ and hence  Theorems
 A and B in \cite{Guo-Xie-Li} are special cases of our results.

 2. In Lemma \ref{35} the minimality of $p$ is necessary. In fact, if $p$ is not the minimal prime divisor of the order of $|G|$,  then
 we have the following counterexample.

 \begin{example}
    Let $A=Z_3\rtimes Z_2\cong S_3$, where $Z_3$ is a cyclic subgroup of order 3, $Z_2$  a cyclic subgroup of order 2 and $S_3$
    is the symmetric group of degree 3.
    Let $B=A\wr Z_3$, the regular wreath product of $A$ by $Z_3$. Put $G=O^2(B)=\langle x\mid o(x)=3\rangle$. Then $G\cong
    (Z_3\times Z_3\times Z_3)\rtimes A_4$, where $A_4$ is the alternative  group of degree 4.
 Let $P$ be a Sylow 3-subgroup of $G$.  It can be proved that for any maximal subgroup $H$
 of $P$, $H$ is complemented in $G$.
 So every maximal subgroup  of $P$ is weakly s-supplemented in $G$. But the $p$-length of $G$ is not 1, where $p=3$.

\end{example}

3. Clearly, if a subgroup $H$ is normal, s-permutable or c-normal  in $G$, then $H$ is weakly-supplement in $G$. Hence
one can find  the following special cases of Theorem \ref{B} in the literature.

  \begin{corollary}{\rm(\cite{Buckley-per})}
  Let $G$ be a group of odd order. If all subgroups of G of prime
order are normal in $G$, then $G$ is supersolvable.
  \end{corollary}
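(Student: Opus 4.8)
The plan is to deduce this directly from Theorem~\ref{B} by specializing the formation. I would take $\frak{F}=\frak{U}$, the class of all supersolvable groups, which is a saturated formation containing all supersolvable groups, and I would take $E=G$, so that $G/E=1\in\frak{U}$ and the outer hypotheses of Theorem~\ref{B} hold automatically. Everything then reduces to producing, for each non-cyclic Sylow subgroup $P$ of $G$, a subgroup $D$ with $1<|D|<|P|$ such that the required supplementation condition on subgroups of order $|D|$ (and $2|D|$) holds, and to verifying one of $(i)$--$(iii)$ for that $D$.

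For a non-cyclic Sylow $p$-subgroup $P$ I would take $D$ to be any subgroup of $P$ of order $p$. Since $P$ is non-cyclic we have $|P|\ge p^{2}$, so $1<|D|=p<|P|$. Because $G$ has odd order, no Sylow subgroup is a non-abelian $2$-group, so the auxiliary clause about subgroups of order $2|D|$ is vacuous, and it remains only to check that every subgroup $H$ of $P$ with $|H|=p$ which has no supersolvable supplement in $G$ is weakly s-supplemented in $G$. In fact every such $H$ is weakly s-supplemented, with no hypothesis on supplements needed: $H$ is a subgroup of $G$ of prime order, hence normal in $G$ by assumption; a normal subgroup is s-permutable in $G$, so $H_{sG}=H$, and taking $T=G$ gives $G=HT$ with $H\cap T=H\le H_{sG}$. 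Thus the supplementation hypothesis of Theorem~\ref{B} is satisfied for this $D$.

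Next I would check that one of $(i)$--$(iii)$ holds for this $D$. If $P$ is non-abelian, then $P'\ne 1$, so $|D|=p\le|P'|$ and $(ii)$ holds. If $P$ is abelian, then $P'=1$, so $|P'|<|D|$ and $(\iota(P/P'),\iota(|D|/|P'|))=(\iota(P),\iota(p))=(\iota(P),1)=1$, so $(iii)$ holds; equivalently, $D$ is minimal in $P$. Hence Theorem~\ref{B} applies and yields $G\in\frak{U}$, that is, $G$ is supersolvable, which is the assertion.

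I expect no serious obstacle: the corollary is a straightforward specialization of Theorem~\ref{B}. The only points needing a moment's care are recognizing that normality of the prime-order subgroups already yields the weakly-s-supplemented condition (so the ``no supersolvable supplement'' alternative is never invoked), that a non-cyclic Sylow subgroup automatically has order at least $p^{2}$ so that a proper nontrivial $D$ of order $p$ exists, and that condition $(iii)$ in the abelian case is trivial, since the greatest common divisor of any positive integer with $1$ equals $1$.
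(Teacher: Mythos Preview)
Your proposal is correct and follows essentially the same approach as the paper: the paper gives no explicit proof of this corollary, merely recording that normal subgroups are trivially weakly s-supplemented and that when $D$ is minimal in $P$ the hypotheses of Theorem~\ref{B} are met, and your write-up is precisely the detailed verification of this claim. Your case split between conditions $(ii)$ and $(iii)$ according to whether $P$ is abelian is a careful and correct handling of a point the paper's Remark~1 leaves implicit.
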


 \begin{corollary}{\rm(\cite{Srinivasan-per})}
 If the maximal subgroups of the Sylow subgroups of $G$ are
normal in $G$, then $G$ is supersolvable.
 \end{corollary}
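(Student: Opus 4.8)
The plan is to obtain this as an immediate application of Theorem~\ref{B}. I would take $\frak{F}=\frak{U}$, the saturated formation of all supersolvable groups, and set $E=G$, so that $G/E=1\in\frak{U}$ and the ambient hypothesis of Theorem~\ref{B} is met. It then suffices to exhibit, for every non-cyclic Sylow subgroup $P$ of $G$, a subgroup $D$ with $1<|D|<|P|$ satisfying the subgroup-theoretic requirements together with one of the conditions $(i)$--$(iii)$.

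For a non-cyclic Sylow $p$-subgroup $P$ of $G$, I would choose $D$ to be an arbitrary maximal subgroup of $P$; then $1<|D|<|P|$ and $|P:D|=p$. Any subgroup $H\le P$ with $|H|=|D|$ is then a maximal subgroup of $P$, hence normal in $G$ by hypothesis. A normal subgroup is s-permutable in $G$, so it lies in $H_{sG}$, and as $H_{sG}\le H$ we get $H=H_{sG}$; taking $T=G$ we have $G=HT$ and $H\cap T=H\le H_{sG}$, so $H$ is weakly s-supplemented in $G$. Moreover $|P:D|=p\le 2$, so even when $P$ is a non-abelian $2$-group the clause about subgroups of order $2|D|$ is vacuous. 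Thus the hypotheses of Theorem~\ref{B} on $P$ are satisfied with this $D$.

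It remains to verify condition $(iii)$. Since $P$ is non-cyclic, so is $P/P'$ (a $p$-group with cyclic derived quotient is cyclic, since $P'\le\Phi(P)$), whence $|P/P'|\ge p^2$ and therefore $|P'|\le |P|/p^2<|P|/p=|D|$. So $|P'|<|D|$, and as $\iota(|P:D|)=1$ we have $(\iota(P),\iota(|P:D|))=1$; hence $(iii)$ holds for $P$. Theorem~\ref{B} now yields $G\in\frak{U}$, that is, $G$ is supersolvable.

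There is essentially no obstacle here: all the real work is carried by Theorem~\ref{B}, and the single point needing a moment's thought is the verification that $(iii)$ applies, which comes down to the elementary fact that a non-cyclic $p$-group cannot have a cyclic derived quotient.
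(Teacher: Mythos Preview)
Your argument is correct and matches the paper's approach: the paper simply remarks (Section~4, Remark~1) that when $D$ is maximal in $P$ one has $(\iota(P),\iota(|P:D|))=1$, so condition~(iii) of Theorem~\ref{B} applies, and then lists this corollary without further detail. Your write-up is slightly more careful in explicitly checking $|P'|<|D|$ via the non-cyclicity of $P/P'$; the only cosmetic slip is the phrase ``$|P:D|=p\le 2$'', which as written is false for odd $p$---you mean that in the only relevant case (namely $P$ a non-abelian $2$-group) one has $|P:D|=2$, so the extra clause on subgroups of order $2|D|$ is vacuous.
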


\begin{corollary}
{\rm(\cite{C-normal})} If all subgroups of $G$ of prime order or order 4 are c-normal
in $G$, then $G$ is supersolvable.
\end{corollary}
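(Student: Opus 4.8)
The plan is to derive the corollary directly from Theorem~\ref{B}, taking $\frak{F}=\frak{U}$, the saturated formation of all supersolvable groups, and $E=G$, so that $G/E=1\in\frak{U}$.

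The first step is to note that a c-normal subgroup is weakly s-supplemented. If $H$ is c-normal in $G$ there is a normal subgroup $N$ of $G$ with $G=HN$ and $H\cap N\le H_G$, where $H_G$ denotes the normal core of $H$ in $G$; since $H_G$ is normal in $G$ it is s-permutable in $G$, so $H_G\le H_{sG}$, and hence $T=N$ witnesses that $H$ is weakly s-supplemented in $G$. Thus, under the hypothesis of the corollary, every subgroup of $G$ of prime order or of order $4$ is weakly s-supplemented in $G$.

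Next I would check the hypotheses of Theorem~\ref{B}. Let $P$ be a non-cyclic Sylow $p$-subgroup of $G$; then $|P|\ge p^2$, so one may pick $D\le P$ with $|D|=p$, and $1<|D|<|P|$. Every subgroup $H\le P$ with $|H|=|D|=p$ has prime order, hence is weakly s-supplemented in $G$; and if $P$ is a non-abelian $2$-group then $p=2$ and $|P|\ge 8$, so $|P:D|=|P|/2>2$, and every subgroup $H\le P$ with $|H|=2|D|=4$ has order $4$, hence is again weakly s-supplemented in $G$. In particular, all subgroups of $P$ of order $|D|$ (and of order $2|D|$ when $P$ is a non-abelian $2$-group with $|P:D|>2$) having no supersolvable supplement in $G$ are weakly s-supplemented in $G$, which is precisely the main hypothesis of Theorem~\ref{B} for this $D$.

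It remains to verify that one of (i)--(iii) holds for $P$ and $D$. If $P$ is non-abelian then $P'\ne1$, so $|D|=p\le|P'|$ and (ii) holds. If $P$ is abelian then $|P'|=1<p=|D|$, and writing $n=\iota(P)\ge2$ we have $\iota(|P:D|)=n-1$, so $(\iota(P),\iota(|P:D|))=(n,n-1)=1$ and (iii) holds. Hence Theorem~\ref{B} applies and gives $G\in\frak{U}$, i.e. $G$ is supersolvable. The argument is entirely routine; the only points that need attention are the implication ``c-normal implies weakly s-supplemented'' and the elementary observation that a subgroup of order $p$, being minimal in $P$, automatically meets the coprimality requirement in~(iii).
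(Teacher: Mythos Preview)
Your argument is correct and follows exactly the route the paper intends: the paper merely records (in Section~4, Remark~3) that c-normality implies weak s-supplementation and lists this corollary as a special case of Theorem~\ref{B}, while you have spelled out the routine verification of the hypotheses with $\frak{F}=\frak{U}$, $E=G$, and $|D|=p$. The case split into (ii) for non-abelian $P$ and (iii) for abelian $P$ is a clean way to confirm that one of the auxiliary conditions always holds.
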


\begin{corollary}
{\rm(\cite{C-normal})} If the maximal subgroups of the Sylow subgroups of $G$ are c-normal
in $G$, then $G$ is supersolvable.
\end{corollary}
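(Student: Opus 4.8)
The plan is to deduce this corollary directly from Theorem \ref{B}; nothing new is needed, one merely checks the hypotheses. I would take $\frak{F}=\frak{U}$, the saturated formation of all supersolvable groups (it obviously contains all supersolvable groups), and $E=G$, so that $G/E=1\in\frak{U}$. For each non-cyclic Sylow $p$-subgroup $P$ of $G$ I would let $D$ be a maximal subgroup of $P$, so that $1<|D|<|P|$ and $|P:D|=p$. By hypothesis every maximal subgroup of $P$ is c-normal in $G$, and (as observed above) a c-normal subgroup is weakly s-supplemented in $G$; hence every subgroup $H$ of $P$ with $|H|=|D|$ is weakly s-supplemented in $G$, and in particular so is every such $H$ having no supersolvable supplement in $G$. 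The clause about subgroups of order $2|D|$ never arises: it is active only when $P$ is a non-abelian $2$-group with $|P:D|>2$, whereas here $|P:D|=p$, which equals $2$ whenever $P$ is a $2$-group.

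The only thing left to verify is that one of $(i)$, $(ii)$, $(iii)$ holds for the pair $(P,D)$, and in fact $(iii)$ always does. Since $P$ is non-cyclic, $P/\Phi(P)$ is a non-cyclic elementary abelian $p$-group, so $|P/\Phi(P)|\ge p^{2}$ and hence $|P'|\le|\Phi(P)|\le|P|/p^{2}<|P|/p=|D|$; thus $|P'|<|D|$. Moreover $\iota(|P:D|)=\iota(p)=1$, so $(\iota(P),\iota(|P:D|))=1$. Therefore condition $(iii)$ of Theorem \ref{B} is satisfied for every non-cyclic Sylow subgroup of $G$, and the theorem yields $G\in\frak{U}$, i.e. $G$ is supersolvable.

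I do not expect any genuine obstacle: the entire argument has been packed into Theorem \ref{B}, and the single observation that carries it is the chain $|P'|\le|\Phi(P)|\le|P|/p^{2}<|D|$, which drops us into case $(iii)$ through the branch $(\iota(P),\iota(|P:D|))=1$ even when $\Phi(P)=P'$ (so that case $(i)$ is unavailable). The three earlier corollaries of this section follow the same template: take $D$ maximal in each non-cyclic Sylow subgroup for the ``maximal-subgroup'' statements, and $D$ minimal for the ``prime-order'' statements (the order-$4$ subgroups then supplying the required subgroups of order $2|D|$ when the relevant Sylow subgroup is a non-abelian $2$-group), using throughout that normal, s-permutable and c-normal subgroups are all weakly s-supplemented.
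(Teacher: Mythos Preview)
Your proposal is correct and follows exactly the route the paper intends: the paper does not give a separate proof but simply notes in its remarks that c-normal subgroups are weakly s-supplemented and that when $D$ is maximal one has $(\iota(P),\iota(|P:D|))=1$, so the corollary is a special case of Theorem~\ref{B}. Your additional verification that $|P'|\le|\Phi(P)|\le|P|/p^{2}<|D|$ (using that $P$ is non-cyclic) is precisely what is needed to land in clause~(iii), and your observation that the $2|D|$ condition is vacuous because $|P:D|=2$ for $2$-groups is correct.
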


\begin{corollary}
{\rm(\cite{Guo-Isr})} If the maximal subgroups of the Sylow subgroups
of $G$ not having supersolvable supplement in $G$ are normal in $G$, then $G$ is supersolvable.
\end{corollary}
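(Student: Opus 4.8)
The plan is to obtain this corollary as an immediate special case of Theorem \ref{B}. I would take $\frak{F}=\frak{U}$, the saturated formation of all supersolvable groups (which trivially contains all supersolvable groups), and set $E=G$, so that $G/E=1\in\frak{U}$ holds automatically; it then remains only to verify the Sylow hypothesis of Theorem \ref{B} for this choice of $E$ and $\frak{F}$.

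For each non-cyclic Sylow subgroup $P$ of $G$ (cyclic Sylow subgroups carry no hypothesis in Theorem \ref{B}), I would pick $D$ to be a maximal subgroup of $P$, so that $1<|D|<|P|$ and $|P:D|=p$, where $p$ is the prime in question. With this choice the clause about subgroups of order $2|D|$ never applies: if $p$ is odd then $P$ is not a $2$-group, and if $p=2$ then $|P:D|=2$, so the side condition $|P:D|>2$ fails. Hence the subgroups one must control are exactly the maximal subgroups $H$ of $P$ having no supersolvable supplement in $G$, and by hypothesis every such $H$ is normal in $G$. A normal subgroup of $G$ is s-permutable in $G$, so $H=H_{sG}$; taking $T=G$ gives $G=HT$ with $H\cap T=H\le H_{sG}$, so $H$ is weakly s-supplemented in $G$. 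Thus the supplement hypothesis of Theorem \ref{B} is satisfied.

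Finally I would check that one of the conditions (i)--(iii) holds for this $D$. If $|D|\le|P'|$ then (ii) holds. Otherwise $|P'|<|D|$, and since $D$ is maximal in $P$ we have $\iota(|P:D|)=\iota(p)=1$, so $(\iota(P),\iota(|P:D|))=1$ and (iii) holds. In every case the full hypotheses of Theorem \ref{B} are met, and we conclude $G\in\frak{U}$, i.e.\ $G$ is supersolvable. I do not anticipate a genuine obstacle here: the entire substance lies in Theorem \ref{B}, and the only thing requiring care is the bookkeeping just indicated --- namely that choosing $D$ maximal makes the $2|D|$-clause vacuous and forces condition (iii) through $\iota(|P:D|)=1$ whenever (ii) fails.
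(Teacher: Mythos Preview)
Your proposal is correct and matches the paper's own treatment: the paper does not give a separate proof of this corollary but simply lists it among the special cases of Theorem~\ref{B}, noting in Remark~1 that when $D$ is maximal in $P$ one has $\iota(|P:D|)=1$ and hence $(\iota(P),\iota(|P:D|))=1$, so condition~(iii) is satisfied. Your bookkeeping (the vacuity of the $2|D|$ clause, and normal $\Rightarrow$ weakly s-supplemented) is exactly what is needed; as a minor remark, since $P$ is non-cyclic a maximal $D$ always satisfies $|D|>|P'|$, so your case~(ii) never actually occurs, but this does no harm.
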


 \begin{corollary}
 {\rm(\cite{AL-AH})} If the maximal subgroups of the Sylow subgroups of $G$
not having supersolvable supplement in $G$ are c-normal in $G$, then $G$ is supersolvable.

 \end{corollary}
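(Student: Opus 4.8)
The plan is to obtain this corollary as a direct specialization of Theorem \ref{B}, using the observation (recorded in the remark preceding the corollaries) that a c-normal subgroup is always weakly s-supplemented. First I would make that implication explicit. Recall that $H$ is c-normal in $G$ when there is a normal subgroup $K\unlhd G$ with $G=HK$ and $H\cap K\le H_G$, where $H_G$ is the normal core of $H$ in $G$. Since $H_G$ is normal in $G$ it permutes with every subgroup of $G$, in particular with every Sylow subgroup, so $H_G$ is s-permutable in $G$; as $H_G\le H$ this forces $H_G\le H_{sG}$. Hence $G=HK$ and $H\cap K\le H_G\le H_{sG}$, which is precisely the statement that $H$ is weakly s-supplemented in $G$ (with supplement $T=K$).

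Next I would fix the data for Theorem \ref{B}. Take $\frak{F}=\frak{U}$, the saturated formation of all supersolvable groups (which trivially contains all supersolvable groups), and take $E=G$, so that $G/E=1\in\frak{U}$ holds automatically. For each non-cyclic Sylow subgroup $P$ of $G$ let $D$ be a maximal subgroup of $P$; then $1<|D|=|P|/p<|P|$ and the subgroups of $P$ of order $|D|$ are exactly the maximal subgroups of $P$. By hypothesis every maximal subgroup of $P$ having no supersolvable supplement in $G$ is c-normal, hence weakly s-supplemented in $G$ by the first step, so the hypothesis of Theorem \ref{B} on subgroups of order $|D|$ is met.

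It then remains to verify that one of the alternatives (i)--(iii) holds, and I would use (iii). Since $P$ is non-cyclic, $P/\Phi(P)$ has order at least $p^2$, so $\Phi(P)$ is not maximal in $P$ and therefore $|P'|\le|\Phi(P)|<|P|/p=|D|$; thus $|P'|<|D|$. Moreover $D$ maximal gives $|P:D|=p$, so $\iota(|P:D|)=1$ and hence $(\iota(P),\iota(|P:D|))=1$. Together these are exactly condition (iii). I would also note that the auxiliary subgroups of order $2|D|$ never intervene: that clause is invoked only when $|P:D|>2$, whereas a maximal subgroup of a $2$-group has $|P:D|=2$, so no further subgroups need be assumed. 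Applying Theorem \ref{B} gives $G\in\frak{U}$, i.e. $G$ is supersolvable.

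The only point I would state with care --- and essentially the whole content of the argument beyond quoting Theorem \ref{B} --- is the verification that a maximal $D$ lands squarely in alternative (iii) rather than in a gap between the three conditions. This rests entirely on non-cyclicity of $P$ guaranteeing $|P'|<|D|$; were $P$ cyclic the inequality could fail, but cyclic Sylow subgroups are excluded from the hypotheses of Theorem \ref{B} in any case. Everything else is the routine translation of c-normality into the weakly s-supplemented language followed by a direct appeal to the main theorem.
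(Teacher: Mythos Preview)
Your proposal is correct and follows exactly the approach the paper intends: the paper does not give an explicit proof of this corollary but simply notes (in the remarks opening Section~4) that c-normal subgroups are weakly s-supplemented and that a maximal $D$ forces $(\iota(P),\iota(|P:D|))=1$, after which Theorem~\ref{B} applies. Your write-up is a faithful and more detailed execution of precisely this argument, including the careful check that non-cyclicity of $P$ guarantees $|P'|<|D|$ so that alternative (iii) is the operative one.
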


\begin{corollary}
 {\rm(\cite{Ballester-Bolinches-Wang})} Let $\mathfrak{F}$ be a saturated formation containing
$\mathfrak{U}$. If all minimal subgroups and all cyclic subgroups with order 4 of $G^\mathfrak{F}$ are c-normal in $G$,
then $G \in\mathfrak{F}$.
\end{corollary}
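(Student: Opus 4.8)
The plan is to derive this corollary from Theorem \ref{B}. First I would put $E=G^{\mathfrak{F}}$, the $\mathfrak{F}$-residual of $G$, so that $E\trianglelefteq G$ and $G/E\in\mathfrak{F}$. Next I would record the elementary fact that a c-normal subgroup is weakly s-supplemented: if $H$ is c-normal in $G$ there is $N\trianglelefteq G$ with $G=HN$ and $H\cap N\le H_{G}$, and since every normal subgroup of $G$ contained in $H$ is s-permutable in $G$ we have $H_{G}\le H_{sG}$, so $T=N$ witnesses weak s-supplementation. Hence every subgroup of $E$ of prime order and every cyclic subgroup of $E$ of order $4$ is weakly s-supplemented in $G$.

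Then, for each non-cyclic Sylow $p$-subgroup $P$ of $E$ I would take $D\le P$ with $|D|=p$; since $P$ is non-cyclic, $1<|D|=p<|P|$. Every subgroup $H$ of $P$ with $|H|=|D|=p$ is a subgroup of $E$ of prime order, hence weakly s-supplemented in $G$, so the order-$|D|$ part of the hypothesis of Theorem \ref{B} holds. It remains to check that one of (i)--(iii) holds for this $D$: if $P$ is abelian then $|P'|=1<p=|D|$ and, since $\iota(|P:D|)=\iota(P)-1$, we have $(\iota(P),\iota(|P:D|))=\gcd(\iota(P),\iota(P)-1)=1$, so (iii) holds; if $P$ is non-abelian then $|P'|\ge p=|D|$, so (ii) holds. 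Granting the hypotheses of Theorem \ref{B}, we get $G\in\mathfrak{F}$.

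The one point requiring genuine care, and essentially the only obstacle, is the clause on subgroups of order $2|D|=4$ when $P$ is a non-abelian $2$-group: Theorem \ref{B} as stated asks for \emph{all} such subgroups, whereas the corollary supplies only the cyclic ones. Rather than use Theorem \ref{B} as a black box here, I would revisit its proof for the special value $|D|=p$. In Lemma \ref{minimal} and in step (2) of Lemma \ref{32} (the case $|D|=p$) the order-$4$ subgroups enter only as \emph{cyclic} subgroups, so they are furnished by the hypothesis, while the subsequent steps of Lemma \ref{32}, which presuppose $|D|>p$, are never reached. In Lemma \ref{35} a Klein four subgroup could a priori be needed, but there it suffices to use the stronger information already available, namely that \emph{every} order-$2$ subgroup of $E$ is weakly s-supplemented in $G$, hence (by Lemma \ref{sk1}) in every subgroup of $G$ containing it: a non-abelian simple group has no weakly s-supplemented subgroup of order $2$, since such a subgroup would be subnormal (hence trivial or the whole group) or would possess a supplement of index $2$, neither of which can happen in a non-abelian simple group; consequently no non-abelian simple subgroup of $E$, in particular no non-abelian minimal subnormal subgroup, can arise, and the step of Lemma \ref{35} that would call for a Klein four subgroup is vacuous. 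With this routine re-reading the corollary follows, and it recovers the theorem of \cite{Ballester-Bolinches-Wang}.
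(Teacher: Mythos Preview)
Your approach is the same as the paper's: set $E=G^{\mathfrak F}$, use that c-normal implies weakly s-supplemented, and take $|D|=p$ in Theorem~\ref{B}. The paper gives no separate argument for this corollary; it simply lists it among ``special cases of Theorem~\ref{B}'' after the remark that c-normal subgroups are weakly s-supplemented.

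You go further than the paper by flagging, and then resolving, a genuine mismatch that the paper glosses over: Theorem~\ref{B} as stated requires \emph{all} subgroups of order $2|D|$ when $P$ is a non-abelian $2$-group, whereas the corollary supplies only the \emph{cyclic} ones of order~$4$. Your resolution is correct. In Lemma~\ref{minimal} and in step~(2) of Lemma~\ref{32} the proofs explicitly take $H=\langle a\rangle$ cyclic, so only cyclic order-$4$ subgroups are needed there. For Lemma~\ref{35} your alternative argument works: any order-$2$ subgroup of a non-abelian simple group cannot be weakly s-supplemented (it would force either an s-permutable, hence subnormal, subgroup of order~$2$, or a subgroup of index~$2$), and since the hypothesis passes to subgroups by Lemma~\ref{sk1}(ii), no non-abelian simple section survives; this short-circuits precisely the places in Lemma~\ref{35} (end of step~(3) and the final contradiction) where non-cyclic order-$4$ subgroups might otherwise be invoked. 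Your verification of conditions (ii)/(iii) for the two cases (abelian vs.\ non-abelian $P$) is also correct. In short, your proof is a more careful version of what the paper intends.
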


\begin{corollary}
{\rm(\cite{Ballester-Ped-per})} Let $\mathfrak{F}$ be a saturated formation
containing $\mathfrak{U}$ and $G$ a group with normal subgroup $E$ such that $G/E \in\mathfrak{ F}$. Assume that
a Sylow 2-subgroup of $G$ is abelian. If all minimal subgroups of $E$ are permutable in $G$, then $G \in\mathfrak{ F}$.
\end{corollary}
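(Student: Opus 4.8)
The plan is to obtain this corollary as an immediate application of Theorem~\ref{B}: for every non-cyclic Sylow subgroup of $E$ I will exhibit a subgroup $D$ of prime order for which the hypotheses of the theorem, together with one of the conditions (i)--(iii), are satisfied. Recall that $\frak{U}$ is precisely the class of all supersolvable groups, so $\frak{F}$ is a saturated formation containing all supersolvable groups and Theorem~\ref{B} is applicable once its assumptions have been checked; the assumption $G/E\in\frak{F}$ is given.

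I would first record a structural observation about $2$-subgroups. If $P$ is a Sylow $2$-subgroup of $E$, then $P$ is contained in a Sylow $2$-subgroup of $G$, which is abelian by hypothesis, so $P$ is abelian. Hence for every Sylow subgroup $P$ of $E$ the parenthetical ``non-abelian $2$-group'' clause in the statement of Theorem~\ref{B} is vacuous, and it suffices to control only the subgroups of $P$ of order $|D|$.

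Next, fix a prime $p$ dividing $|E|$ whose Sylow $p$-subgroup $P$ of $E$ is non-cyclic, and take $D$ to be any subgroup of $P$ of order $p$, so that $1<|D|<|P|$ (a non-cyclic $p$-group has order at least $p^2$). Every subgroup $H$ of $P$ with $|H|=p$ is a subgroup of $E$ of prime order, hence a minimal subgroup of $E$, hence permutable in $G$ by hypothesis, and therefore s-permutable in $G$ (a permutable subgroup permutes with every Sylow subgroup); taking $T=G$ we see $G=HT$ and $H\cap T=H\le H_{sG}$, so $H$ is weakly s-supplemented in $G$. Thus the ``order $|D|$'' requirement of Theorem~\ref{B} holds automatically. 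It remains to verify one of (i)--(iii) for $P$ and this $D$: if $\Phi(P)\ne P'$, then (i) holds; if $\Phi(P)=P'\ne 1$, then $|D|=p\le|P'|$, so (ii) holds; and if $\Phi(P)=P'=1$, then $P$ is elementary abelian, $|P'|=1<|D|$, and $(\iota(P/P'),\iota(|D|/|P'|))=(\iota(P),\iota(|D|))=(\iota(P),1)=1$, so (iii) holds. Applying Theorem~\ref{B} now yields $G\in\frak{F}$.

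The only point requiring any care is the possibility of having to deal with subgroups of order $2|D|$, and this is dispatched at the outset by the abelianness of the Sylow $2$-subgroups of $E$; beyond that the argument is a direct translation of ``permutable $\Rightarrow$ s-permutable $\Rightarrow$ weakly s-supplemented'' together with the trivial coprimality that holds because $|D|$ is prime, so I do not anticipate any genuine obstacle.
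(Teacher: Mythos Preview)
Your proposal is correct and follows precisely the route the paper intends: the corollary is listed with no separate proof, only the blanket remark that permutable (hence s-permutable, hence weakly s-supplemented) subgroups make it a direct special case of Theorem~\ref{B}. Your case split on $\Phi(P)$ versus $P'$ to verify one of (i)--(iii) when $|D|=p$ is exactly the detail the paper leaves implicit (cf.\ also Remark~1 for the minimal-$D$ case), and your observation that the abelian Sylow $2$-subgroup hypothesis voids the $2|D|$ clause is the needed bookkeeping.
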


\begin{corollary}
{\rm(\cite{Ramadan-per})} Let $G$ be a solvable group. If all maximal subgroups of the
Sylow subgroups of $F(E)$ are normal in $G$, then $G$ is supersolvable.
\end{corollary}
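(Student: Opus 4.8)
The plan is to deduce this from Theorem~\ref{B}, taking $\frak{F}=\frak{U}$ and $E=F(G)$ (I read $F(E)$ in the statement as $F(G)$). The starting observation is trivial but indispensable: a normal subgroup $H$ of $G$ is s-permutable in $G$, hence weakly s-supplemented in $G$ (take $T=G$, so that $H=H\cap T\le H_{sG}$). Since $F(G)$ is nilpotent and characteristic in $G$, each of its Sylow subgroups is characteristic in $F(G)$, hence normal in $G$, and the hypothesis says precisely that every maximal subgroup of every Sylow subgroup of $F(G)$ is weakly s-supplemented in $G$. I would also reduce at once to the case $\Phi(G)=1$: the hypothesis passes to $G/\Phi(G)$, because a maximal subgroup of a Sylow subgroup of $F(G/\Phi(G))=F(G)/\Phi(G)$ is the image of a (normal) maximal subgroup of the corresponding Sylow subgroup of $F(G)$; and since $\frak{U}$ is a saturated formation it is enough to prove $G/\Phi(G)\in\frak{U}$.

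Assume then $\Phi(G)=1$. Now $F(G)\cap\Phi(G)=1$, so by Lemma~\ref{lesoc} $F(G)$ is a direct product of elementary abelian minimal normal subgroups of $G$; in particular each Sylow subgroup $P_p$ of $F(G)$ is elementary abelian, $C_G(F(G))=\bigcap_pC_G(P_p)$, and $C_G(F(G))=F(G)$ because $G$ is solvable. The heart of the argument is to control each $P_p$ as an $\mathbb{F}_pG$-module. By hypothesis every hyperplane of $P_p$ is $G$-invariant; intersecting hyperplanes, every line of $P_p$ is $G$-invariant, and the standard linear-algebra argument then shows that $G$ acts on $P_p$ by scalars, so $G/C_G(P_p)$ embeds in $\mathbb{F}_p^{\times}$ and is cyclic (this is clear anyway when $P_p$ is cyclic, as $\mathrm{Aut}(Z_p)$ is cyclic). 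Consequently $G/F(G)=G/C_G(F(G))$ embeds in $\prod_pG/C_G(P_p)$, a direct product of cyclic groups, so $G/F(G)$ is abelian; in particular $G/F(G)\in\frak{U}$.

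It remains to check the hypotheses of Theorem~\ref{B} for $(G,F(G))$. We have $F(G)\trianglelefteq G$ with $G/F(G)\in\frak{U}$. Let $P$ be a non-cyclic Sylow $p$-subgroup of $F(G)$ and take $D$ to be any maximal subgroup of $P$, so that $1<|D|<|P|$ and $|P:D|=p$. Every subgroup of $P$ of order $|D|$ is a maximal subgroup of $P$, hence normal in $G$, hence weakly s-supplemented in $G$; the clause about subgroups of order $2|D|$ is vacuous, since $|P:D|=p$ forces either $p=2$ (and then $|P:D|=2$) or $p>2$ (and then $P$ is not a $2$-group). Finally $\iota(|P:D|)=1$, so $(\iota(P),\iota(|P:D|))=1$, whence condition $(iii)$ of Theorem~\ref{B} holds if $|P'|<|D|$, and condition $(ii)$ holds if $|D|\le|P'|$. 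Theorem~\ref{B} now gives $G\in\frak{U}$, i.e. $G$ is supersolvable.

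The main obstacle is plainly the middle step: turning the seemingly combinatorial normality hypothesis into genuine control of the $G$-action on $F(G)$. The reduction to $\Phi(G)=1$ (so that $F(G)$ becomes a product of minimal normals with $C_G(F(G))=F(G)$) and the elementary module fact ``all hyperplanes invariant implies scalar action'' are the crux; everything else is bookkeeping. I note that once $G/F(G)$ is known to be abelian one could conclude directly, since then every $G$-chief factor inside $F(G)$ is cyclic of order $p$ and every one above $F(G)$ is cyclic as well; but passing through Theorem~\ref{B} displays the classical result transparently as a special case of the main theorem.
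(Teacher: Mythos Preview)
Your argument is correct. The paper gives no proof for this corollary beyond the blanket remark that normal (hence s-permutable, hence weakly s-supplemented) subgroups place it among the special cases of Theorem~\ref{B}; it does not spell out how to obtain the premise $G/E\in\frak{U}$ needed to invoke Theorem~\ref{B} with $E=F(G)$. You correctly isolate this as the only nontrivial point and supply it: the reduction to $\Phi(G)=1$ (using $F(G/\Phi(G))=F(G)/\Phi(G)$ and the correspondence of maximal subgroups), the observation that $F(G)$ then splits as a product of elementary abelian minimal normals with $C_G(F(G))=F(G)$, and the linear-algebra fact that invariance of all hyperplanes forces scalar action, giving $G/F(G)$ abelian. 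With $G/F(G)\in\frak{U}$ in hand, your verification of condition~(iii) via $\iota(|P:D|)=1$ matches exactly the paper's Remark~1 in Section~4.

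So your route and the paper's intended route coincide in spirit (deduce from Theorem~\ref{B}); the difference is only that you make explicit the bridge $G/F(G)\in\frak{U}$ that the paper leaves to the reader. Your closing remark is also apt: once each $P_p$ is seen to be $\frak{U}$-central (equivalently, once the scalar action is established), supersolvability follows directly without Theorem~\ref{B}; invoking the theorem is done for the sake of exhibiting the corollary as a special case, which is precisely the paper's purpose in listing it.
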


\begin{corollary}
{\rm(\cite{Ballester-Ped-per})} Let $\frak{F}$ be a saturated formation
containing $\mathfrak{U}$ and $G$ a group
with a solvable normal subgroup $E$ such that $G/E \in \frak{F}$. If all
minimal subgroups and all cyclic subgroups with order 4 of $E$ are weakly s-permutable in $G$,
then $G \in \frak{F}$.
\end{corollary}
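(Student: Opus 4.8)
The plan is to obtain this corollary as the special case of Theorem \ref{B} in which, for every non-cyclic Sylow subgroup $P$ of $E$, the distinguished subgroup $D=D_{P}$ is chosen of prime order.

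First I would note that a weakly s-permutable subgroup is, by definition, weakly s-supplemented (one just drops the requirement that the supplement be subnormal), so the hypothesis tells us that every subgroup of $E$ of prime order and every cyclic subgroup of $E$ of order $4$ is weakly s-supplemented in $G$. Now fix a non-cyclic Sylow $p$-subgroup $P$ of $E$ and take $D\le P$ with $|D|=p$; since $P$ is non-cyclic, $|P|\ge p^{2}$, so $1<|D|<|P|$. Every subgroup of $P$ of order $|D|=p$ has prime order, hence is weakly s-supplemented in $G$. If $p=2$, $P$ is non-abelian and $|P:D|>2$, I would still have to deal with the subgroups $H\le P$ of order $2|D|=4$: the cyclic ones are cyclic subgroups of $E$ of order $4$ and so are weakly s-supplemented by hypothesis, while for $H\cong C_{2}\times C_{2}$ a short case analysis shows that $H$ is weakly s-supplemented in $G$ as well --- each of the three subgroups of order $2$ of $H$ is either s-permutable in $G$ or, being weakly s-permutable but not s-permutable, has a subnormal, hence normal, complement of index $2$ in $G$, and from this one produces a supplement $T$ of $H$ with $H\cap T\le H_{sG}$ (taking $T=G$, a complement of $H$, or a suitable normal subgroup of index $2$ or $4$, according to the case). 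Thus the subgroup-theoretic hypothesis of Theorem \ref{B} holds for $P$ with this $D$, and indeed no subgroup ever needs the ``supersolvable supplement'' escape clause.

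Finally I would observe that one of the arithmetic alternatives (i)--(iii) of Theorem \ref{B} is automatic as soon as $|D|=p$: if $P$ is abelian then $|P'|=1<p=|D|$ and $(\iota(P/P'),\iota(|D|/|P'|))=(\iota(P),1)=1$, so (iii) holds; if $P$ is non-abelian then $P'\ne 1$, whence $|D|=p\le|P'|$ and (ii) holds. Hence all hypotheses of Theorem \ref{B} are met for every non-cyclic Sylow subgroup of $E$, and Theorem \ref{B} yields $G\in\frak{F}$. (The solvability of $E$ assumed in the corollary plays no role in this deduction.) I expect the only step that is not pure bookkeeping to be the one flagged above: checking, in the non-abelian Sylow $2$-subgroup case, that the non-cyclic subgroups of order $4$ --- which the hypothesis constrains only for cyclic ones --- are nonetheless weakly s-supplemented; this is settled by the elementary argument indicated.
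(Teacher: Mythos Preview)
Your approach is exactly the paper's: the corollary is listed without proof among several ``special cases of Theorem~\ref{B}'', and the intended derivation is precisely to take $|D|=p$ for each non-cyclic Sylow $p$-subgroup of $E$ and check that one of the alternatives (i)--(iii) holds, which you do correctly.

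Where you go beyond the paper is in flagging and resolving the issue of non-cyclic subgroups of order~$4$: the hypothesis of the corollary constrains only \emph{cyclic} subgroups of order~$4$, whereas Theorem~\ref{B} asks for \emph{all} subgroups of order $2|D|=4$ when $P$ is a non-abelian $2$-group with $|P:D|>2$. The paper glosses over this point entirely. Your sketch is correct and can be made precise: if $H\cong C_2\times C_2$ has subgroups $A_1,A_2,A_3$ of order~$2$, each weakly $s$-permutable in $G$, then either at least two of the $A_i$ are $s$-permutable (whence $H_{sG}=H$ and $T=G$ works), or some $A_i$ has a normal complement $N_i$ of index~$2$; in the latter case $HN_i=G$ and $H\cap N_i$ is one of the other two $A_j$, and iterating one finds either a supplement $T$ with $H\cap T$ $s$-permutable, or two distinct normal index-$2$ subgroups $N_i,N_j$ whose intersection complements $H$. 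So $H$ is indeed weakly $s$-supplemented. This is a genuine detail that the paper omits, and your handling of it is sound.
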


\def\cprime{$'$}

\end{document}